\DeclareMathAlphabet{\pazocal}{OMS}{zplm}{m}{n}
\newtheorem{lemma}{Lemma}[section]
\newtheorem{corollary}[lemma]{Corollary}
\newtheorem{theorem}[lemma]{Theorem}
\newtheorem{proposition}[lemma]{Proposition}
\newtheorem{remark}[lemma]{Remark}
\newtheorem{definition}[lemma]{Definition}
\newtheorem{example}[lemma]{Example}
\newcolumntype{L}[1]{>{\raggedright\let\newline\\\arraybackslash\hspace{0pt}}m{#1}}
\newcolumntype{C}[1]{>{\centering\let\newline\\\arraybackslash\hspace{0pt}}m{#1}}
\newcolumntype{R}[1]{>{\raggedleft\let\newline\\\arraybackslash\hspace{0pt}}m{#1}}
\newcommand{\bK}{\mathbb{K}}
\def\l{\lambda}
\def\remove#1{}
\def\H{\mathcal H}
\def\Id{\mathcal I}
\def\span#1{\text{\rm span}(#1)}
\newcommand{\ann}{{\rm ann}}
\newcommand{\Supp}{{\rm supp}}
\newcommand{\N}{{\mathbb{N}}}
\newcommand{\K}{{\mathbb{K}}}
\newcommand{\h}{\mathcal{H}}
\title[Connecting ideals in evolution algebras with hereditary subsets]{Connecting ideals in evolution algebras with hereditary subsets  of its associated graph}
\author[Y. Cabrera]{Yolanda Cabrera Casado}
\author[D. Mart\'{\i}n]{Dolores Mart\'{\i}n Barquero}
\author[C. Mart\'{\i}n]{C\'andido Mart\'{\i}n Gonz\'alez}
\author[A. Tocino]{Alicia Tocino}
\address{Departamento de Matem\'atica Aplicada, E.T.S. Ingenier\'\i a Inform\'atica, Universidad de M\'alaga, Campus de Teatinos s/n. 29071 M\'alaga.   Spain. }
\email{yolandacc@uma.es}
\address{Departamento de Matem\'atica Aplicada, Escuela de Ingenier\'\i as Industriales, Universidad de M\'alaga, Campus de Teatinos s/n. 29071 M\'alaga.   Spain.}
\email{dmartin@uma.es}
\address{Departamento de \'Algebra Geometr\'{\i}a y Topolog\'{\i}a, Fa\-cultad de Ciencias, Universidad de M\'alaga, Campus de Teatinos s/n. 29071 M\'alaga. Spain.} \email{candido\_m@uma.es}
\address{Departamento de Matem\'atica Aplicada, E.T.S. Ingenier\'\i a Inform\'atica, Universidad de M\'alaga, Campus de Teatinos s/n. 29071 M\'alaga.   Spain.}
 \email{alicia.tocino@uma.es}
\subjclass[2020] {17A60, 17D92, 05C25.} 
\keywords{Evolution algebra, simple algebra, directed graph, maximal ideal, absorption property, hereditary subset, saturated subset and Galois connection.}
\thanks{ The  authors are supported by the Spanish Ministerio de Ciencia e Innovaci\'on through project  PID2019-104236GB-I00/AEI/10.13039/501100011033 and  by the Junta de Andaluc\'{\i}a  through project  FQM-336,  all of them with FEDER funds. 
}
\begin{document}

\begin{abstract}
In this article, we introduce a relation including  ideals of an evolution algebra  and hereditary subsets of vertices of its associated graph and establish some properties among them. This relation allows us to determine maximal ideals and ideals having the absorption property of an evolution algebra in terms of its associated graph. We also define a couple of order-preserving maps, one from the sets of ideals of an evolution algebra to that of hereditary subsets of the corresponding graph, and the other in the reverse direction. Conveniently restricted to the set of absorption ideals and to the set of hereditary saturated subsets, this is a monotone Galois connection.  According to the graph, we characterize  arbitrary dimensional finitely-generated (as algebras) evolution algebras under certain restrictions of its graph. Furthermore, the simplicity of finitely-generated perfect evolution algebras is described on the basis of the simplicity of the graph.
\end{abstract}

\maketitle

\section{Introduction }

In the literature of graph $C^*$-algebras, Leavitt path algebras, higher rank graph algebras, etc., we can find how certain structural properties of the algebra (simplicity, primeness, primitivity, etc.,)  can be characterized in terms of properties of its associated graph (see, for example, \cite{AbramsAranda} for simplicity, \cite{ArandaPardoSiles2} for primeness and \cite{ArandaPardoSiles} for primitivity  in Leavitt path algebras) to whose pioneer discoverers we recognize here. In fact, this philosophy is followed in a good number of publications about evolution algebras. 
In \cite{EL1}, where the way of associating a graph to an evolution algebra appeared for the first time, which we are going to use, the authors study the nilpotency of an evolution algebra regarding whether there exists oriented cycles in the associated graph. They also relate the indecomposability of an evolution algebra with  the connectivity of its graph. The space of derivations of some evolution algebras has also been analyzed through properties of its graph (see \cite{Elduque/Labra/2019}, \cite{PMP3}, \cite{YPMP}, \cite{Qaralleh/Mukhamedov/2019}). Recently, in \cite{Hilbert}, the authors  give conditions on the graph to ensure the existence of isomorphisms between Hilbert evolution algebras.

In our work, we associate a certain hereditary set of  vertices  of the associated graph to an ideal of the evolution algebra and vice versa. Under mild restrictions, this association is a Galois connection. 
This way of linking ideals with subsets of vertices allows us to detect on the graph whether the corresponding algebra contains ideals that verify certain properties and which are those ideals.

The article is organized as follows. In Section \ref{sec2}, we recall some basic concepts and properties  on directed graphs and evolution algebras theory that will be drawn on throughout this paper.  In Section \ref{sec3}, we start by defining, given an ideal $I$ of an evolution algebra over a field $\K$ with a natural basis $B=\{e_i\}_{i\in \Lambda}$, the hereditary subset $H_I$ of the associated graph, as well as, fixed a hereditary subset of the vertices $H$, we define  a subspace $I_H$ that turns out to be a basic ideal (Definition \ref{defHI}). These are the key objects that will be used along this work. Several properties and relations among them are shown in Proposition \ref{sabado1}. Moreover, we characterize the saturated hereditary subsets $H$, through the ideals $I_H$ of the algebra. 
In  Proposition \ref{sabado2}, we describe  maximal ideals $I$ that contains $A^2$. Furthermore, we also identify  maximal ideals $I$ such that  $A^2 \not\subset I$ in terms of maximal hereditary subsets  of vertices of its associated graph.

Consequently, in Corollary \ref{blanco}, we prove that 

for arbitrary dimensional perfect evolution algebras, all the maximal ideals are perfectly identified in the associated  graph and, in addition,  every maximal ideal is basic. As a summary, in Remark \ref{remark13} we explain  that  modulo some inevitable subspaces of codimension 1 of an arbitrary dimensional evolution algebra $A$ containing $A^2$, the maximal ideals are those associated to maximal hereditary  subsets of vertices.

To guarantee the existence of maximal ideals in an evolution algebra requires that the evolution algebra be finitely-generated (as algebra). So, we dedicate Section \ref{sec5} to study  the necessary conditions for an evolution algebra to be finitely-generated. In Lemma \ref{azul}, we state that if the set of vertices of the graph can not be viewed as the hereditary closure of a finite set, then the evolution algebra can not be finitely-generated.
Moreover, in the case in which the graph has a finite number of bifurcations, the finitely-generated evolution algebras are characterized (Proposition \ref{noche}).

In Section \ref{sec4},  we identify, for non-degenerate evolution $\K$-algebras, the ideals $I$ having the absorption property as those of the form $I=I_H$ with $H$ hereditary and saturated subset (Proposition \ref{sabado3} and Proposition \ref{propnueva}).
In Theorem \ref{carrillada}, we give  equivalences for the ideals of an evolution algebra having the absorption property. Together with Proposition \ref{sabado3}, it  leads us a way of finding these ideals by just searching the hereditary and saturated sets of the associated graph for non-degenerate evolution algebras.
For finite-dimensional perfect evolution algebras and an arbitrary ideal $I$, we obtain  that  $I$ has the absorption property and it is generated by the set $H_I$ (Theorem \ref{fiesta}). Moreover,  $I$ is a basic ideal (Corollary \ref{propbasic}). In the second part of this section, we show that in a finite-dimensional perfect evolution algebra, the set of all hereditary and saturated subsets of vertices and the set of all ideals having the absorption property with certain maps defined in Definition \ref{definicionapl} form a monotone Galois connection (Theorem \ref{Galois}).

In Section \ref{sec6}, we introduce the definition of 
simple directed  graph (Definition \ref{defgrafosimple})
and we prove that, for finitely-generated perfect evolution algebras, simplicity in the algebra is equivalent to simplicity in the associated graph (Proposition \ref{propsimple}).
In item (1) of Theorem \ref{teo64}, we show that, given a directed graph $E$, a hereditary subset  $H \subset E^0$ is maximal if and only if the only hereditary subsets of the quotient graph $E/H$ are $(E/H)^0$ and $\emptyset$.
In item (2) of Theorem \ref{teo64}, we prove that  simple graphs are  either an isolated vertex or a graph with no sources or sinks. Finally, in item (3) of Theorem \ref{teo64}, we show that the associated graph of a quotient evolution algebra under an ideal of the form $I_H$ is the quotient graph under the hereditary subset, $H$.

The examples distributed throughout the article, in addition to providing us with a large number of examples of evolution algebras of different types, show the advantage of using the graph to locate ideals with certain properties in the evolution algebra.

\section{Basic concepts}\label{sec2}

Althrough this work, the set of natural numbers will be denoted by $\N=\{0,1,\ldots\}$ and the notation $\N^*$ will stand for $\N^*:=\N\setminus\{0\}$. 

A \emph{directed graph} is a $4$-tuple $E=(E^0, E^1, r_E, s_E)$ 
consisting of two disjoint sets $E^0$, $E^1$ and two maps
$r_E, s_E: E^1 \to E^0$. All the graphs, in this paper, will be directed graphs so, when we say graph we mean directed graph. The elements of $E^0$ are called \emph{vertices} and the elements of 
$E^1$ are called \emph{edges} of $E$. Observe that $E^1$ and $E^0$ can have any cardinality. Further, for $f\in E^1$, $r_E(f)$ and $s_E(f)$ are 
 the \emph{range} and the \emph{source} of $f$, respectively.  
If there is no confusion with respect to the graph we are considering, we simply write $r(f)$ and $s(f)$. A
vertex $v$ for which $s^{-1}(v)=\emptyset$  is called a \emph{sink}, while a vertex $v$ for which $r^{-1}(v)=\emptyset$ is called a \emph{source}. 
A vertex $u\in E^0$ is said to be \emph{regular} if $0<\vert s^{-1}(u)\vert < \infty$. A \emph{path} $\mu$ of length $m$ is a finite chain of edges $\mu=f_1\ldots f_m$ such that $r(f_i)=s(f_{i+1})$ for $i=1,\ldots,m-1$.  We denote by $s(\mu):=s(f_1)$ the source of $\mu$ and $r(\mu):=r(f_m)$ the range of $\mu$.  We write $\mu^0$ the set of vertices of $\mu$. 
If $r(\mu)=s(\mu)$,
then $\mu$ is called a \emph{closed path}. 
The vertices will be considered trivial paths, that is to say, paths of length zero.
We define $\text{Path}(E)$ as the set of all paths in $E$. 
If $E$ is a directed graph and $S\subset E^0$, then denote by $T(S)$ the \emph{tree} of $S$ where $$T(S)=\{v\in E^0 \colon \text{ exist }\lambda \in \text{Path}(E) \text{ and } u \in S \text{ with } s(\lambda)=u, r(\lambda)=v\}.$$

A graph $E$ is said to be {\em row-finite} if for any $u\in E^0$ we have 
that $\vert s^{-1}(u)\vert$ is finite. A graph is said to satisfy \emph{Condition (Sing)} if among two vertices there is at most one edge.  
A directed graph is \textit{strongly connected} if given two different vertices there exists a path that goes from the first to the second one.

A vertex $u\in E^0$ is called a \textit{bifurcation vertex} (or it is said that there is a \textit{bifurcation at $u$}) if $\vert s^{-1}(u)\vert \geq 2$.

A subset $H\subset E^0$ is \textit{hereditary} if for all $u\in H$, $T(u)\subset H$. We will denote the subset of all subsets of $E^0$ that are hereditary  by $\h_E$, if there is no risk of confusion we will write $\h$. The \textit{ hereditary closure} of $S\subset E^0$ is the smallest hereditary subset containing $S$. It will be denoted by $\overline{S}$. We say that $H\in \H$  with $H\neq E^0$ is \textit{maximal} if $H\subset H'$, $H'\in \H$, implies $H=H'$ or $H'=E^0$. A set $H\subset E^0$ is \textit{saturated} if for any regular  vertex $u\in E^0$  we have that $r(s^{-1}(u)) \subset H$ implies $u\in H$.

An \emph{evolution algebra} over a field $\bK$ is a $\bK$-algebra $A$ which has a basis $B=\{e_i\}_{i\in \Lambda}$ such that $e_ie_j=0$ for every $i, j \in \Lambda$ with $i\neq j$. Such a basis is called a \emph{natural basis}.

Let $A$ be an evolution $\K$-algebra with a natural basis $B=\{e_i\}_{i\in \Lambda}$.
Denote by $M_B=(\omega_{ij})$ the \emph{structure matrix} of $A$ relative to $B$, i.e., $e_i^2 = \sum_{j\in \Lambda} \omega_{ji}e_j$. If $\ann(A):=\{x \colon x A =0 \}=\span{\{e_i \colon e_i^2=0\}}=0$ we say that $A$ is \textit{non-degenerate}. Otherwise we say that $A$ is \textit{degenerate}. In the particular case of $A^2=A$, we say that the evolution algebra is \emph{perfect}. For finite-dimensional evolution algebras, this is equivalent to $\det(M_B)\neq 0$. Let $u=\sum_{i\in \Lambda}\alpha_ie_i$ be an element of $A$. Recall that 
the {\emph{support of}} $u$ \emph{relative to} $B$, denoted by $\Supp_B(u)$, is defined as the set $\Supp_B(u)=\{i\in \Lambda\ \vert \  \alpha_i \neq 0\}$. 
An ideal $I$ of an evolution $\K$-algebra $A$ is called \textit{evolution ideal} if it admits a natural basis. In the case that this natural basis can be extended to the whole algebra we say that $I$ is a \textit{basic ideal}.  An ideal $I$ of a commutative $\K$-algebra $A$ satisfies  \textit{the absorption property} if $x A\subset I$ implies $x\in I$.

Following the philosophy of \cite{EL1}, we denote by $E=(E^0,E^1,r_E,s_E)$ the \emph{directed graph associated to an evolution algebra} $A$ \emph{relative to a natural basis} $B=\{e_i\}_{i\in \Lambda}$ where the set $E^0=\{e_i\}_{i \in \Lambda}$ and $f \in E^1$ with $s(f)=e_i$ and $r(f)=e_j$ if the $j$-th coordinate of $e_i^2$ relative to $B$ is nonzero.
So,  we sometimes identify $B$ with $E^0$.

Observe that the directed graph associated to an evolution algebra is row-finite and satisfies the Condition (Sing).

\section{Maximal ideals}\label{sec3}

In this section we define, on the one hand, certain subset of vertices related to an ideal, which will be a hereditary subset, and on the other hand, a subspace constructed from a hereditary subset, which turns out to be a basic ideal. We prove some properties of these two objects that leads us to recognize maximal ideals. Moreover, we identify this type of ideals in the graph associated to an evolution algebra. For perfect evolution algebras, all the maximal ideals come from ideals associated to maximal hereditary   subsets.
\begin{definition}\label{defHI}\rm 
For an ideal $I$ of an evolution $\K$-algebra $A$ with natural basis $B=\{e_i\}_{i \in \Lambda}$ and $E$ its associated directed graph, denote by $H_I$ the subset of $E^0$ given as 
$H_I:=\{e_i\in E^0\colon e_i^2\in I\}$. For any hereditary subset $H\subset E^0$ define the subspace $I_H:=\oplus_{e_i\in H} \K e_i =\span{H}$. Notice that $I_{\emptyset}=\{0\}$.
\end{definition}

\begin{lemma}\label{bocata}
Let $A$ be an evolution $\K$-algebra with natural basis $B=\{e_i\}_{i\in \Lambda}$ and $E$ its associated directed graph. We have the following assertions:

\begin{enumerate}
    \item If $H$ is a hereditary subset of $E^0$, then $I_H$ is a basic ideal of $A$. 
\item  If $I$ is an ideal of $A$, then $H_I$ is a hereditary subset of $E^0$.
\end{enumerate}

\end{lemma}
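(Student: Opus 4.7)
For part (1), the plan is to verify the ideal condition directly from the definition of the associated graph together with the hereditary hypothesis. I would take a generic element $e_i \in H$ and multiply it against an arbitrary basis element $e_j \in B$: if $i \neq j$ the product is zero by the natural basis property, and $e_i^2 = \sum_k \omega_{ki} e_k$ only involves those indices $k$ for which $\omega_{ki}\neq 0$, i.e., precisely the direct successors of $e_i$ in $E$. Since $H$ is hereditary, $T(e_i)\subset H$, so every such $e_k$ lies in $H$, whence $e_i^2 \in I_H$. This shows $I_H\cdot B \subset I_H$, and extending linearly (and invoking commutativity) yields $I_H\cdot A\subset I_H$. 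The ``basic'' property is then immediate: the spanning set $\{e_i:e_i\in H\}$ is a subset of the natural basis $B$, and can be completed to $B$ using $\{e_i:e_i\notin H\}$, so by definition $I_H$ is basic.

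For part (2), the plan is to reduce hereditariness to the statement that every direct successor of a vertex in $H_I$ again belongs to $H_I$, and then iterate along paths. Explicitly, suppose $e_i\in H_I$, so $e_i^2\in I$, and let $e_k$ be a direct successor of $e_i$, meaning $\omega_{ki}\neq 0$. Multiplying the relation $e_i^2\in I$ on the right by $e_k\in A$ and using the orthogonality of the natural basis, I obtain
\[
e_i^2\, e_k \;=\;\sum_j \omega_{ji}\, e_j e_k \;=\;\omega_{ki}\, e_k^2 \;\in\; I.
\]
Since $\omega_{ki}\neq 0$, this forces $e_k^2\in I$, i.e., $e_k\in H_I$. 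Applying this step to each edge along an arbitrary path originating at $e_i$ shows $T(e_i)\subset H_I$, which is exactly hereditariness (recalling that trivial paths are included, so $e_i\in T(e_i)$ automatically).

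The arguments are essentially computational and I do not foresee a serious obstacle; the only subtlety is bookkeeping in (2), namely making sure the direct-successor step is applied inductively along paths rather than just to immediate neighbours, so that the full tree $T(e_i)$ is captured.
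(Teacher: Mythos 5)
Your proof is correct and follows essentially the same route as the paper: for (2) the paper also multiplies $e_i^2\in I$ by the successor $e_k$ to isolate $\omega_{ki}e_k^2\in I$ and then iterates along paths, while for (1) the paper simply declares the ideal property straightforward (your computation $e_ie_j=0$ for $i\neq j$ and $e_i^2\in I_H$ by hereditariness is the intended verification). Nothing is missing.
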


\begin{proof}
The first item is straightforward. For the second item, we take $e_j\in H_I$ being a non-sink and assume first that $f\in E^1$ has $s(f)=e_j$ and $r(f)=e_k\in E^0$. Then $e_j^2=ke_k+\sum_i k_i e_i$ where $k, k_i \in \K$  with $k \neq 0$ and $e_i\in B\setminus\{e_k\}$. Since $e_j^2\in I$ we have $e_ke_j^2\in I$. So $I\ni e_ke_j^2=ke_k^2$ which gives $e_k\in H_I$ because $k\ne 0$. The general situation is that there is 
a path $\lambda=f_1\cdots f_q$ with $s(f_1)=e_j$ and $r(f_k)=e_k$.
Applying the previous particular case we get that  $r(f_1)\in H_I$ and iterating this argument we finally get that $e_k\in H_I$.

\end{proof}
Thus for any ideal $I$ of an evolution $\K$-algebra, the particular set $H_I$ constructed above induces an ideal $I_{H_I}\supset I$ that will be proved in Proposition \ref{sabado1}.

\begin{remark} \rm
    An interesting property is that $
I_{T(S)}=\bigoplus_{e_i\in T(S)}\K e_i\triangleleft A$
for any $S\subset E^0$. 

\end{remark}

We denote by $\Id$ the set of all ideals of an evolution $\K$-algebra $A$
and we still denote by $\H$ the set of all hereditary subsets of the directed graph associated to  $A$ fixing a natural basis.

In the below Proposition we give some properties that can be observed. 

\begin{proposition}\label{sabado1}
Let $A$ be an evolution  $\K$-algebra with $B=\{e_i\}_{i\in \Lambda}$ a natural basis. Then:
\begin{enumerate}
    \item If $H, H'\in\H$, then $H\cap H'$, $H\cup H'\in\H$.
    \item If $H, H'\in\H$, then $I_{H\cap H'}=I_H\cap I_{H'}$. 
  If $I,I'\in \mathcal{I}$, then $H_{I\cap I'}=H_I\cap H_{I'}$.
    \item $I_{H\cup H'}=I_{H}+I_{H'}$. If $H\cap H'=\emptyset$, then $I_{H\cup H'}=I_{H}\oplus I_{H'}$.
     
    \item $I\subset I_{H_I}$ and $H\subset H_{I_H}$. 
    \item $I_H=A$ if and only if  $H=E^0$. 
    \item $I_{H_I}=A$ if and only if  $A^2\subset I$. 
     \item Assume $H\in \H$, $H_{I_H}=H$ if and only if $H$ is a saturated set.
     \item If $H_I=I\cap B$, then $H_I$ is a saturated set.
     \item If $H \in \H$, then $H=I_H \cap B$.
\end{enumerate}

\end{proposition}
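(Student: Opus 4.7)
The plan is to handle the nine items by directly unpacking the two definitions $I_H = \mathrm{span}(H)$ and $H_I = \{e_i : e_i^2 \in I\}$, leveraging two recurring facts: (a) $B$ is a natural basis, so $\mathrm{supp}_B(e_i^2) = r(s^{-1}(e_i))$ and $A^2 = \mathrm{span}\{e_i^2 : i \in \Lambda\}$, and (b) $I_H$ is a basic ideal by Lemma \ref{bocata}. Items (1)--(3), (5), (6), (9) amount to bookkeeping: (1) checks that $T$ respects union and intersection; (2) and (3) exploit that for subsets $S \subset B$, membership in $\mathrm{span}(S)$ is equivalent to the $B$-support lying in $S$, so the set operations on hereditary subsets translate into the corresponding operations on their spans, while the dual identity $H_{I\cap I'}=H_I\cap H_{I'}$ is immediate from the definition of $H_{(-)}$; (5), (6), (9) follow from the same basis argument, with (6) additionally using $A^2 = \mathrm{span}\{e_i^2\}$.

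The substantive steps are (4), (7), (8). For (4), the inclusion $I \subset I_{H_I}$ is obtained by taking $x = \sum \alpha_i e_i \in I$ and multiplying by each $e_j$ in its support: naturality of $B$ gives $x e_j = \alpha_j e_j^2 \in I$, so $e_j^2 \in I$ and therefore $e_j \in H_I$. The inclusion $H \subset H_{I_H}$ uses that $I_H$ is an ideal containing each $e_i \in H$, so $e_i^2 \in I_H$. Item (8) then falls out directly: if $u$ is regular and $r(s^{-1}(u)) \subset H_I = I \cap B$, then $u^2 = \sum_{f \in s^{-1}(u)} \omega_{r(f),u}\, r(f)$ is a linear combination of elements of $I$, hence $u^2 \in I$ and $u \in H_I$.

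The crux is item (7). The key reduction is
\[
H_{I_H} = \{e_i \in E^0 : r(s^{-1}(e_i)) \subset H\},
\]
which holds because $e_i^2 \in I_H = \mathrm{span}(H)$ iff its $B$-support $r(s^{-1}(e_i))$ is contained in $H$. Combined with (4), only the reverse inclusion $H_{I_H} \subset H$ needs argument. For a regular vertex $e_i$ this is precisely the saturation condition $r(s^{-1}(e_i)) \subset H \Rightarrow e_i \in H$, which delivers the equivalence at once on regular vertices. The delicate point, and the main obstacle, is that a sink $e_i$ satisfies $r(s^{-1}(e_i)) = \emptyset \subset H$ vacuously and so automatically sits in $H_{I_H}$ regardless of $H$; I expect the argument to handle this either by reading the saturation condition as extending trivially to non-regular vertices, or by invoking the implicit setting in which such vertices do not disturb the equivalence. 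Once this point is pinned down, item (7) is immediate from the displayed reduction.
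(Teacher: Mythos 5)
Your route coincides with the paper's on every item: (1)--(3), (5), (6), (9) are dismissed there as routine for the same reasons you give, (4) and (8) are proved by exactly the multiplications you describe, and (7) is reduced to the same identity $e_j\in H_{I_H}\Leftrightarrow r(s^{-1}(e_j))\subset H$.

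The one point you leave open --- sinks in item (7) --- is a genuine obstruction, not a formality the ambient setting absorbs. A sink $e_j$ has $e_j^2=0\in I_H$ for every $H$, so $e_j\in H_{I_H}$ unconditionally, while saturation (which by the paper's definition only constrains regular vertices) never forces $e_j$ into $H$. Concretely, take $B=\{e_1,e_2\}$ with $e_1^2=e_1$ and $e_2^2=0$: then $H=\{e_1\}$ is hereditary and saturated, yet $H_{I_H}=\{e_1,e_2\}\neq H$. So the implication ``saturated $\Rightarrow H_{I_H}=H$'' fails in general; of your two proposed escapes, the first (extending the saturation condition to non-regular vertices) silently changes the definition, and the second (an implicit hypothesis) is what is actually required --- under non-degeneracy every $e_i^2\neq 0$, there are no sinks, every vertex is regular, and your displayed reduction closes the argument at once. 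The paper's own proof of this direction applies the saturation condition to an arbitrary $e_j\in H_{I_H}$ without checking regularity, so it carries the same gap; your hesitation is the more careful reading. The forward implication ($H_{I_H}=H\Rightarrow H$ saturated) is unaffected, since there one only needs to verify the condition at regular vertices, as both you and the paper do.
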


\begin{proof}
The items (1), (2), (3), (5), (6) and (9) can be easily checked. 
For item (4), consider $0\ne x=\sum \lambda_i e_i \in I$ where  $\lambda_i \in \K^{\times}$. So, $e_ix=\lambda_i e_i^2 \in I$ implies $e_i^2\in I$. By definition, $e_i \in H_{I}$ and hence $x\in I_{H_I}$. 
Now, consider $e_j\in H$. So $T(e_j)\subset H$. Since $e_j^2=\sum \lambda_i e_i$ with $e_i\in H$ we have that $e_j^2\in I_H$ which implies that $e_j\in H_{I_H}$. For item (7), we first assume that $H=H_{I_H}$ and consider $r(s^{-1}(e_j))\subset H$ with $e_j\in E^0$ and $e_j$ is not a sink. Then $e_j^2=\sum_{e_i\in H} k_i e_i \in H$,  $k_i\in \K$ which implies that $e_j^2\in I_H$ and $e_j\in H_{I_H}= H$.  
Now, we assume that $H$ is saturated. We have to prove $H_{I_H}\subset H$. Consider $e_j\in H_{I_H}$. This implies that $e_j^2\in I_H$ and $e_j^2=\sum_{e_i\in H} k_i e_i$, $k_i\in\K$. This means that $r(s^{-1}(e_j))\subset H$ so, $e_j\in H$ as we wanted. For item (8), let $e\in B$ such that $0\neq e^2$ ($e$ is a regular vertex) and $r(s^{-1}(e)) \subset H_I$. We can write $e^2=\sum_{e_i\in H_I}\l_ie_i$. As $H_I=I \cap B$, then $e^2\in I$. Therefore, $e \in H_I$.
\end{proof}

Notice that the inclusions of item (4) Proposition \ref{sabado1} may be strict as the following example shows:
\begin{example}\rm
Consider $A$ an evolution $\K$-algebra with natural basis $B=\{e_1,e_2\}$ such that $e_1^2=e_1+e_2, e_2^2=-e_1-e_2$ and the ideal $I=\span{\{e_1+e_2\}}$. We have that $H_I=\{e_1,e_2\}=B$, so $I_{H_I}=A$. Hence, $I\subsetneq I_{H_I}$.
Now, consider $A'$ an evolution $\K$-algebra with natural basis $B'=\{e_1',e_2'\}$ such that $e_1'^2=e_1', e_2'^2=e_1'$ and the hereditary set $H=\{e'_1\}$. Then $I_H=\span{\{ e'_1\}}$  and $H_{I_H}=\{e'_1,e'_2\}\supsetneq H$.

\end{example}

The converse of item (8) Proposition \ref{sabado1} is not true in general. Let us see the next example.

\begin{example} \rm
    Let $A$ be an evolution $\K$-algebra and $B=\{e_1,e_2,e_3\}$ a natural basis with  $e_1^2=e_2^2=e_1+e_2$ and $e_3^2=e_3$. If we consider the ideal $I=\span{\{e_1+e_2\}}$, then $H_I=\{e_1,e_2\}$ which is a hereditary and saturated set. But $I\cap B=\emptyset\neq H_I$.
\end{example}

\begin{remark} \rm
Consider $H$ a maximal hereditary subset of a natural basis $B$ of an evolution $\K$-algebra $A$. One can observe that either $\{e_i\in B\,\colon\,e_i^2=0\}\subset H$ or at most there is one sink $e_i$ such that $H\cup \{e_i\}=B$. 
\end{remark}

One can think that every maximal ideal comes from a maximal hereditary set, but it is not true in general as we show in the following example.

\begin{example}\rm 

Consider $A$ an evolution $\K$-algebra with natural basis $B=\{e_1,e_2\}$ such that $e_1^2=e_1+e_2,e_2^2=e_1+e_2$. $B$ does not have maximal hereditary  subsets except the empty set but $I=\span{\{e_1+e_2\}}$ is a maximal ideal.
\end{example}

We provide a first approach to maximal ideals of an evolution $\K$-algebra in the following proposition.

\begin{proposition}\label{sabado2}
Let $A$ be an evolution  $\K$-algebra. Then:
\begin{enumerate}
        \item Let $I$ be an ideal of $A$ with $A^2\subset I$. Then $I$ is a maximal ideal of $A$ if and only if $I$ is a subspace of codimension $1$.
       \item If $I$ is a maximal ideal of $A$ with $A^2\not\subset I$, then $I=I_{H_I}$.
       \item If $I_H$ is a maximal ideal, then $H$    is a maximal element of $\H$. A kind of converse is:
    if $H$ is maximal in $\H$      and $I_H$ is not contained in a  subspace $S$ with $A^2\subset S\subsetneq A$, then $I_H$ is maximal.
    \end{enumerate}

\end{proposition}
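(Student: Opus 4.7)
The plan is to address the three items in order; items (1) and (2) will follow directly from Proposition \ref{sabado1}, while item (3) requires the main effort.

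For item (1), I would pass to the quotient. When $A^2\subset I$, the multiplication induced on $A/I$ is identically zero, so the ideals of $A/I$ coincide with its vector subspaces. Consequently, $I$ is maximal in $A$ if and only if $A/I$ has no proper nonzero subspace, i.e., $\dim_\K(A/I)=1$, i.e., $I$ has codimension one. For item (2), I would combine items (4) and (6) of Proposition \ref{sabado1}: (4) gives $I\subset I_{H_I}$, while (6) says $I_{H_I}=A$ if and only if $A^2\subset I$. Since by hypothesis $A^2\not\subset I$, the ideal $I_{H_I}$ is proper; containing the maximal ideal $I$, it must equal $I$.

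Item (3) is the substantive one. For the forward implication, assuming $I_H$ is maximal and $H\subsetneq H'$ with $H'\in\H$, one has $I_H\subsetneq I_{H'}$ (the additional basis vectors in $H'\setminus H$ guarantee the strict inclusion); maximality of $I_H$ then forces $I_{H'}=A$, whence $H'=E^0$ by item (5) of Proposition \ref{sabado1}. Thus $H$ is maximal in $\H$.

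For the partial converse, I would fix an ideal $J$ with $I_H\subset J\subsetneq A$ and aim to show $J=I_H$. The key step is to track $H_J$: from $I_H\subset J$ together with item (4) of Proposition \ref{sabado1}, one obtains $H\subset H_{I_H}\subset H_J$ (the second inclusion because any $e_i$ with $e_i^2\in I_H\subset J$ automatically lies in $H_J$), and $H_J$ is hereditary by Lemma \ref{bocata}(2). Maximality of $H$ in $\H$ now yields the dichotomy $H_J=H$ or $H_J=E^0$. In the second case, every $e_i^2$ lies in $J$, hence $A^2\subset J\subsetneq A$ with $I_H\subset J$, contradicting the standing hypothesis. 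In the first case, any $x=\sum_i\lambda_i e_i$ in $J$ satisfies $\lambda_i e_i^2 = e_i x \in J$, so each basis vector in the support of $x$ lies in $H_J=H$, giving $x\in I_H$ and hence $J=I_H$. The main obstacle I anticipate is precisely this converse: one must pinpoint the exact role of the auxiliary hypothesis forbidding subspaces $S$ with $A^2\subset S\subsetneq A$ that contain $I_H$, and the dichotomy on $H_J$ is tailored so that this hypothesis eliminates exactly the bad case $H_J=E^0$.
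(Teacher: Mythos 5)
Your proposal is correct and follows essentially the same route as the paper: items (1) and (2) rest on the observation that subspaces containing $A^2$ are ideals and on items (4)--(6) of Proposition \ref{sabado1}, and item (3) uses the same dichotomy $H_J=H$ or $H_J=E^0$ forced by maximality of $H$, with the auxiliary hypothesis ruling out the case $A^2\subset J\subsetneq A$. The only cosmetic difference is that you phrase item (1) via the zero multiplication on the quotient $A/I$ rather than adjoining $\K x$ directly, which is the same underlying fact.
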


\begin{proof}
 For item (1), since $ I$ is a maximal ideal, there exists $x \in A$ such that $x\notin I$. Note that $I + \K x$ is an ideal of $A$ (we recall that if $S$ is a vector subspace of an algebra $A$ with $A^2 \subset S \subset A$, then $S$ is an ideal of $A$) and by maximality $I + \K x = A$. Hence $A/I \cong \K x $. For the converse, suppose $J$ an ideal of $A$ with $I\subsetneq J$. Then $A^2\subset I\subsetneq J$ and $A/J\subsetneq A/I$. Since $\dim(A/I)=1$ we have that $A=J$.
For item (2),  we have that $I \subset I_{H_I}$ by item (4) of Proposition \ref{sabado1}. Since $I_{H_I} \neq A$ by item (6) of Proposition \ref{sabado1}, then by maximality $I=I_{H_I}$.
For the first part of item (3) suppose $H\subset H'$, hence $I_H\subset I_{H'}$. Since $I_H$ is maximal we have two cases: $I_{H'}=I_H$ or $I_{H'}=A$. In the first case, 
for any $e\in H$ we have $e\in I_H$ whence $e\in I_{H'}$ and this implies $e\in H'$. Thus $H\subset H'$ and similarly $H'\subset H$. So, $H=H'$. In the second case, by item (5) of Proposition \ref{sabado1}, $H'=E^0$.
For proving the other assertion, we take $H$ to be maximal and such
that $I_H$ is not contained in any subspace $S$ with $A^2\subset S\subsetneq A$. Assume that $I_H\subset J\ne A$, then $H\subset H_{I_H}\subset H_J$ so: either $H_J=H$, implying $J\subset I_{H_J}= I_H$ or $H_J=E^0$ implying $A^2\subset J$ by item (5) and (6) of Proposition \ref{sabado1}. In the last case we get the contradiction $A^2\subset J\subsetneq A$ and $I_H\subset J$.

\end{proof}

The converse of the item (2) and the converse of the first part of  item (3) of Proposition \ref{sabado2} is not true in general, that is, not all the maximal elements $H\in\H$ produce maximal ideals $I_H$ as shows the following example. 

\begin{example}\rm
Let $A$ be an evolution $\K$-algebra with natural basis $B=\{e_1,e_2,e_3,$\\$e_4\}$ such that $e_1^2=e_1+e_2,e_2^2=e_2,e_3^2=e_1+e_3+e_4,e_4^2=e_1+e_3+e_4$. Then, $H=\{e_1,e_2\}$ is a
maximal hereditary subset of $B$. However, $I_H=\K e_1\oplus \K e_2$ is not a maximal ideal since $J=\span{\{e_1,e_2,e_3+e_4\}}$ is an ideal with $I_H\subset J$ and $J\neq A$. Observe that if we take the ideal $I=I_H$ then $I=I_{H_I}$, moreover $A^2\not\subset I_H$ and $I$ is not a maximal ideal.
\end{example}

\begin{corollary}\label{blanco}
Let $A$ be a perfect evolution $\K$-algebra. Let $I$ be a maximal ideal of $A$, then there exists a maximal hereditary subset $H$ such that $I=I_{H}$. Furthermore, $H=H_I$ and $I$ is a basic ideal.
\end{corollary}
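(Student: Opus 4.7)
The plan is to observe that for a perfect evolution algebra, maximality of an ideal forces the hypothesis $A^{2}\not\subset I$ to hold, and then the corollary falls out as a direct chain of applications of the results just proved.

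First I would argue that $A^{2}\not\subset I$. Indeed, $A$ perfect means $A^{2}=A$, so if $A^{2}\subset I$ we would get $A\subset I$, contradicting the properness of the maximal ideal $I$. Having $A^{2}\not\subset I$ puts us exactly in the setting of item (2) of Proposition \ref{sabado2}, which yields $I=I_{H_{I}}$. This immediately gives us the candidate hereditary subset $H:=H_{I}$, and we have both $I=I_{H}$ and $H=H_{I}$ simultaneously.

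Next I would check that this $H$ is maximal in $\H$. Since $I_{H}=I$ is a maximal ideal of $A$, the first part of item (3) of Proposition \ref{sabado2} applies verbatim and gives that $H$ is a maximal element of $\H$. Finally, $I=I_{H}$ is a basic ideal by item (1) of Lemma \ref{bocata}, since $I_{H}$ is always basic for $H$ hereditary.

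There is no real obstacle here; the whole content of the corollary is the single observation that perfectness rules out the alternative branch $A^{2}\subset I$, so that the dichotomy separating items (1) and (2) of Proposition \ref{sabado2} collapses to the case governed by hereditary subsets. The only thing worth double-checking is that we are not implicitly assuming finite dimension, but the statements of Lemma \ref{bocata}, Proposition \ref{sabado1} and Proposition \ref{sabado2} are all phrased for arbitrary evolution $\K$-algebras, so the argument goes through as is.
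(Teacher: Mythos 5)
Your argument is correct and is exactly the paper's proof, which simply cites items (2) and (3) of Proposition \ref{sabado2}; you have filled in the details (perfectness rules out $A^{2}\subset I$, item (2) gives $I=I_{H_I}$, item (3) gives maximality of $H_I$, and Lemma \ref{bocata}(1) gives basicness). No gaps.
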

\begin{proof}
It is straightforward of item (2) and (3) Proposition \ref{sabado2}.
\end{proof}

\begin{remark}\label{remark13}\rm
From Proposition \ref{sabado2}, one can see that the maximal ideals can be obtained from the graph associated to an evolution algebra except for those maximal ideals, $I$, with $A^2\subset I \subset A$ which must be investigated directly. Summarizing, we can say that the maximal ideals of an evolution algebra exists in two flavours: 
\begin{enumerate}
    \item  Those subspaces of codimension $1$ of $A$ containing $A^2$.
    \item Those ideals $I_H$ with $H\in\H$ maximal, and such that there is no subspace $S$ with $A^2\subset S\subsetneq A$ such that $I_H\subset S$. 
\end{enumerate}
 To illustrate how this works in a particular evolution algebra consider the following examples. Concretely, in Example \ref{ej0} we can see that the evolution algebra has ideals of both types and Example \ref{ej1} gives infinitely many maximal ideals containing $A^2$.
\end{remark}

\begin{example}\label{ej0}\rm
Consider $A$ an evolution $\K$-algebra with $B=\{e_i\,\colon\, i\in \N^* \}$ a natural basis and $e_i^2=e_i+e_{i+1}$ for all $i\in \N^*$ with the following associated graph.
\bigskip

\begin{equation*} 
   E:   \xymatrix{
     & {\bullet}_{e_{1}}\ar@(ul,ur) \ar[r]  & {\bullet}_{e_{2}} \ar[r] \ar@(ul,ur) & {\bullet}_{e_{3}} \ar[r] \ar@(ul,ur) & {\bullet}_{e_{4}}\ar@(ul,ur)  \ar@{.>}[r] &    \\
           }
           \end{equation*}
The only maximal hereditary subset is $H=\{e_i\,\colon \, i\geq 2\}$. So, we have the maximal ideal $I_H=\oplus_{i\geq 2}\K e_i$ since $A/I_H\cong \K \Bar{e_1}$. Moreover, by Proposition \ref{sabado2}, there does not exist any other maximal  ideal except $A^2=\span{\{ e_i+e_{i+1}\,\colon\, i\geq 1\}}$. Indeed, if there exists another maximal ideal $J$ with $J\neq A^2$ necessarily $A^2 \not\subset J$. By item (2) of Proposition \ref{sabado2} we get that $J=I_{H_J}$. Clearly $H_J=H$ and then $J=I_H$. Observe that $A^2$ can not write as $I_{H'}$ with $H'$ a maximal hereditary subset.

\end{example}

\begin{example}\label{ej1}\rm
Consider $A$ an evolution $\K$-algebra with natural basis $B=\{e_1,e_2,$\\
$e_3,e_4,e_5,e_6\}$ such that $e_1^2=e_2,e_2^2=e_2,e_3^2=e_4+e_5,e_4^2=0,e_5^2=e_6, e_6^2=0$. The ideals $I_1=\span{\{ e_2,e_3,e_4,e_5,e_6\}}$ and $I_2=\span{\{ e_1,e_2,e_4,e_5,e_6\}}$ are  maximal ideals that come from the maximal hereditary subsets $H_1=\{e_2,e_3,e_4,e_5,e_6\}$ and $H_2=\{e_1,e_2,e_4,e_5,e_6\}$, respectively,
with $A^2\subset I_1,I_2$ ($A^2=\span{\{ e_2,e_4+e_5,e_6\}}$). Moreover, there are infinite maximal ideals $I'$ such that $A^2\subset I'\subset A$, indeed, $I'=\span{\{ e_2,e_4+e_5,e_6,\alpha e_1+\beta e_3,\gamma e_1+\delta e_3\}}$ with $\det \tiny\begin{pmatrix}
\alpha & \beta\\
\gamma & \delta
\end{pmatrix}\neq 0$.
\end{example}

\begin{proposition}
Consider $A$ an evolution $\K$-algebra with natural basis $B=\{e_i\}_{i \in \Lambda}$.
Let $I\triangleleft A$ be a maximal ideal. Then:

\begin{enumerate}
    \item If $e\in B\setminus I$, then for any $e_i\in B$ either $e_i\in T(e)$ or $e_i^2\in I$. In other words, $B=T(e)\cup H_I$. 
    \item If $\dim(A/I)\ne 1$, then for any $e\in B$
we have $$e^2\in I\quad\Rightarrow\quad e\in I,$$
that is to say, $H_I=I\cap B$.
\end{enumerate}

\end{proposition}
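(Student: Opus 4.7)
For part (1), the plan is to leverage the maximality of $I$ by considering the ideal generated by the tree $T(e)$. Since $T(e)$ is hereditary by construction, $I_{T(e)} = \span{T(e)}$ is a basic ideal by Lemma \ref{bocata}. Now $e \in T(e)$ (as the trivial path at $e$) and $e \notin I$, so $I \subsetneq I + I_{T(e)}$, and maximality forces $I + I_{T(e)} = A$. For any $e_i \in B$, write $e_i = x + y$ with $x \in I$ and $y = \sum_{e_j \in T(e)} \alpha_j e_j$. If $e_i \notin T(e)$, then $e_i \neq e_j$ for every $j$ appearing in the support of $y$, so the natural-basis orthogonality forces $e_i y = 0$. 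Multiplying $e_i = x + y$ by $e_i$ then yields $e_i^2 = e_i x \in I$, i.e.\ $e_i \in H_I$. This gives $B \subseteq T(e) \cup H_I$; the reverse inclusion is trivial.

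For part (2), I would use instead a one-dimensional enlargement of $I$. Given $e \in B$ with $e^2 \in I$, set $J := I + \K e$. The key point is that $J$ is automatically an ideal of $A$: for any $a = \sum_j \alpha_j e_j \in A$, writing $e = e_i$, the natural-basis property gives $e \cdot a = \alpha_i e^2 \in I$, so $J \cdot A \subseteq I \subseteq J$. By maximality of $I$, either $J = I$ (hence $e \in I$, as desired) or $J = A$; in the latter case $A/I \cong \K e/(I \cap \K e)$ would be one-dimensional, contradicting the hypothesis $\dim(A/I) \neq 1$. Thus $H_I \subseteq I \cap B$, and the reverse inclusion $I \cap B \subseteq H_I$ is immediate, since $e \in I \cap B$ implies $e^2 = e\cdot e \in I$.

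The only real obstacle in either part is spotting the correct ``small'' ideal to adjoin to $I$: the tree ideal $I_{T(e)}$ in part (1), and the one-dimensional subspace $\K e$ in part (2). Once these are introduced, the natural-basis orthogonality collapses all relevant cross-products (\emph{killing} $e_i y$ in (1), and making $J$ closed under multiplication in (2)), and both conclusions follow directly from the maximality of $I$.
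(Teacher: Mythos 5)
Your proof is correct and follows essentially the same route as the paper: part (1) via $I+I_{T(e)}=A$ and the orthogonality $e_iy=0$ for $e_i\notin T(e)$, and part (2) via the ideal $I+\K e$ and the dimension dichotomy forced by maximality. The only cosmetic difference is that you justify explicitly that $I+\K e$ is an ideal and compress the paper's two subcases on $I\cap\K e$ into one quotient computation.
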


\begin{proof}
For item (1), assume that $e_i\notin T(e)$.
Since $e\notin I$ and $I$ is a maximal ideal, then $I+I_{T(e)}=A$. We can write $e_i=x+z$ with $x\in I$ and $z=\sum_j \lambda_j e_j$ where $\lambda_j\in \K$ and $e_j\in T(e)$.
Therefore $e_i^2=e_i x\in I$, so $e_i \in H_I$. For item (2), we have $I\subset I+\K e\triangleleft A$, then either $I+\K e=I$ or $I+\K e=A$. In the first case, $e\in I$ as we claimed. In the second one, there are two possibilities:
\begin{enumerate}
    \item If $I\cap \K e\ne 0$, then again $e\in I$.
    \item If $I\cap \K e=0$, then $A/I\cong \K e$ as $\K$-vector spaces. Hence $\dim(A/I)=1$,  a contradiction.
\end{enumerate}
 So, $H_I \subset I \cap B$, since we always have $I\cap B\subset H_I$, then $H_I=I\cap B$. 
\end{proof}

In the Section \ref{sec4}, we will see in Proposition \ref{tarde} that $I$ has the absorption property in the previous settings.

The following example shows an infinite-dimensional evolution $\K$-algebra without either maximal ideals or maximal hereditary subsets in the set of vertices of its associated directed graph.
\begin{example}\rm
Consider $A$ an evolution $\K$-algebra with natural basis $B=\{e_i\colon i\in\mathbb{Z}\}$ and $e_i^2=e_{i+1}$ for all $i\in \mathbb{Z}$. The associated graph is
\bigskip

\begin{equation*} 
   E:   \xymatrix{
     & \ar@{.>}[r] & {\bullet}_{e_{-1}} \ar[r]  & {\bullet}_{e_{0}} \ar[r]  & {\bullet}_{e_{1}} \ar@{.>}[r] &    \\
           }
           \end{equation*}
This algebra is perfect. Any $H_n=\{e_i\colon i\ge n\}$ is a hereditary subset and there is no maximal hereditary subsets. Moreover,  $I_{H_n}=\oplus_{i\ge n}\K e_i$ is an ideal so there is no maximal ideals in this algebra.

\end{example}

In general, the existence of maximal ideals in arbitrary algebras is not guaranteed. However, observe that if a $\K$-algebra $A$ is finitely-generated, then a standard application of Zorn's Lemma implies that $A$ has maximal ideals. For example:

\begin{example}\label{felicidades} \rm
Consider $A$ an evolution $\K$-algebra with natural basis $B=\{e_i\,\colon\, i\in \N^* \}$ and $e_1^2=e_1+e_{2}$ and $e_i^2=e_{i+1}$ for all $i\geq 2$ with the following associated graph.

\bigskip

\begin{equation*} 
   E:   \xymatrix{
     & {\bullet}_{e_{1}}  \ar@(ul,ur) \ar[r]  & {\bullet}_{e_{2}}  \ar[r] & {\bullet}_{e_{3}}  \ar[r] & {\bullet}_{e_{4}}   \ar@{.>}[r] &    \\
           }
           \end{equation*}

$A$ is generated by $\{e_1,e_2\}$ and the ideal generated by $e_2$ is maximal.
\end{example}

\section{Finitely generated evolution algebras}\label{sec5}
We devote this section to study conditions implying the finitely-generated nature of an evolution algebra. 

Let  $A$ be an arbitrary dimensional evolution $\K$-algebra (so possible infinite-dimensional).  Assume that  $A$  is finitely-generated as $\K$-algebra.
Take $\{g_1,\ldots, g_q\}$ to be a finite system of (algebra) generators of  $A$. Fix a natural basis $B=\{e_i\}_{i\in\Lambda}$ 
 of $A$ and consider it as the vertex set of its associated directed graph.

For any $j=1,\ldots, q$, there is no loss of generality if we write $g_j=\sum_{k=1}^n \lambda_{jk} e_{k}$ with $\lambda_{jk}\in\K^{\times}$ and a suitable $n$. 
 Denote by $H$ the hereditary closure of $\{e_{1},\ldots,e_{n}\}\subset B$. Since any $e_{l}\in H$ if $l\in\{1,\ldots,n\}$,
 we have $e_{l}\in I_H$ whence $g_j\in I_H$ for any $j$. We conclude that any finite product $(\cdots(g_{j_1}g_{j_2})\cdots) g_{j_m}\in I_H$ for any $j_1,\ldots,j_m\in\{1,\ldots,q\}$. Thus the whole algebra $A$ agrees with $I_H$. Then, $H=B=E^0$ by item (5) of Proposition \ref{sabado1}. So, we have proved the following Lemma:

\begin{lemma}\label{azul}
 If $A$ is an arbitrary dimensional evolution $\K$-algebra but finitely-\hyphenation{ge-ne-ra-ted}generated as algebra, then the directed graph $E$ associated to $A$
 satisfies that $E^0$ agrees with the hereditary closure of a finite set. 

 \end{lemma}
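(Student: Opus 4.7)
The plan is to exploit the fact that a finite generating set of $A$ as a $\K$-algebra occupies only finitely many coordinates in the natural basis $B=\{e_i\}_{i\in\Lambda}$, and that the basic ideal attached to the hereditary closure of those coordinates is forced to capture the whole algebra.

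First I would fix a finite algebra-generating set $\{g_1,\ldots,g_q\}\subset A$. Each $g_j$ is a finite $\K$-linear combination of basis vectors, so the union
$$S:=\bigcup_{j=1}^{q}\Supp_B(g_j)$$
is a finite subset of $E^0$. I would then set $H:=\overline{S}\in\H$ and invoke item (1) of Lemma \ref{bocata} to note that $I_H=\bigoplus_{e_i\in H}\K e_i$ is an ideal of $A$.

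The bulk of the argument is then a single containment. Every generator lies in $I_H$ because $\Supp_B(g_j)\subset S\subset H$, and since $I_H$ is an ideal (hence a subalgebra) any finite product of the $g_j$'s, together with all $\K$-linear combinations of such products, remains in $I_H$. As $\{g_1,\ldots,g_q\}$ generates $A$ as a $\K$-algebra, this forces $A=I_H$; then item (5) of Proposition \ref{sabado1} yields $H=E^0$, so $E^0=\overline{S}$ is the hereditary closure of the finite set $S$, which is the stated conclusion.

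I do not foresee a serious obstacle; the argument is essentially a support-tracking computation. The only point worth flagging is why the \emph{hereditary} closure is the correct object to take: one needs $I_H$ to be an ideal, not merely a subspace, so that it absorbs all products of generators by arbitrary elements of $A$, and hereditarity of $H$ is precisely what guarantees, via Lemma \ref{bocata}(1), that $I_H$ is closed under these multiplications.
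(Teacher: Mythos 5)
Your argument is correct and is essentially identical to the paper's own proof: both take the (finite) union of supports of the generators, pass to its hereditary closure $H$, observe that $I_H$ is an ideal containing all generators and hence all of $A$, and conclude $H=E^0$ via item (5) of Proposition \ref{sabado1}. No issues.
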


In Example \ref{felicidades}, the $\K$-algebra $A$ is infinite-dimensional but a system of generators is $\{e_1,e_2\}$, besides we have that its hereditary closure is $\overline{\{e_1,e_2\}}=\{e_i\}_{i\ge 1}$ in agreement with Lemma \ref{azul}.

Observe that the condition proved in Lemma \ref{azul} is sufficient  but not necessary as shown in the next example.

\begin{example}\rm 
Consider an evolution $\K$-algebra $A$ with natural basis $B=\{e_i\}_{i\in \N^*}\\ \cup\{e_i'\}_{i\in\N^*}$ and $e_i^2=e_{i+1}+e_i'$ for all $i\in \N^*$. Notice that the set of vertices of the associated graph $E$ is $E^0=\overline{\{e_1\}}$ but $A$ is not finitely-generated. To see this, let us take the subset $\{e_1,\ldots, e_n,e_1',\ldots, e_{n-1}'\}$ of $B$. This subset generates the elements $S=\{e_1,\ldots, e_n,e_1',\ldots, e_{n-1}', e_{n+1}+e_n',e_{n+2}+e_{n+1}',\ldots\}$ but $e_{n+1}$ can not be expressed as linear combination of the elements in $S$.


\begin{equation*} 
   E:   \xymatrix{
   &  & {\bullet}_{e_{1}'} & {\bullet}_{e_{2}'} & {\bullet}_{e_{3}'} & {\bullet}_{e_{4}'} \\
     & {\bullet}_{e_{1}}  \ar[ur] \ar[r]  & {\bullet}_{e_{2}} \ar[ur] \ar[r] & {\bullet}_{e_{3}} \ar[ur] \ar[r] & {\bullet}_{e_{4}} \ar[ur]  \ar@{.>}[r] &    \\
           }
           \end{equation*}
\noindent
A maximal ideal here is $I=\span{\{e_i': i\in \N^*, \,\, e_{j+1}:j\in \N^*\}}$. In fact, it is unique.
\end{example}

\begin{proposition}\label{noche}
Consider $A$ an arbitrary dimensional evolution $\K$-algebra with natural basis $B$ and $E$ its associated directed graph. Assume that  $E$ contains finitely many bifurcations. $A$ is finitely-generated as algebra if and only if $E^0=\overline{S}$ for a finite subset $S\subset E^0$.
\end{proposition}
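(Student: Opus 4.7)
The forward implication is already Lemma \ref{azul} (which does not require the finitely-many-bifurcations hypothesis), so the entire content is the converse: assuming $E$ has finitely many bifurcation vertices and $E^0=\overline{S}$ for a finite $S\subset E^0$, exhibit a finite set of algebra generators of $A$.

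The plan is to pick as a candidate generating set
\[
F := S \;\cup\; \bigl\{r(f)\,:\, f\in E^1,\ s(f)\ \text{is a bifurcation vertex}\bigr\}.
\]
I first observe that $F$ is finite: $S$ is finite by hypothesis, there are finitely many bifurcations by hypothesis, and row-finiteness of $E$ guarantees that each bifurcation emits only finitely many edges.

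Next I will prove that the subalgebra $\langle F\rangle$ generated by $F$ contains every basis vertex $e_k\in B$. Given such $e_k$, since $e_k\in\overline{S}$ there exists a path $\lambda=f_1\cdots f_q$ in $E$ from some $s\in S$ to $e_k$, whose successive vertices I denote $v_0=s,v_1,\dots,v_q=e_k$. I will show by induction on $j$ that $v_j\in\langle F\rangle$. The base case $v_0=s\in S\subset F$ is immediate. For the step, assume $v_j\in\langle F\rangle$ and distinguish two cases according to the nature of $v_j$:
\begin{enumerate}
\item If $v_j$ is a bifurcation vertex, then $v_{j+1}\in r(s^{-1}(v_j))\subset F$ by the very definition of $F$, so trivially $v_{j+1}\in\langle F\rangle$.
\item If $v_j$ is not a bifurcation vertex, then it is regular (since it has the outgoing edge $f_{j+1}$) with $|s^{-1}(v_j)|=1$, so $v_j^2=\mu\,v_{j+1}$ for some $\mu\in\K^\times$; hence $v_{j+1}=\mu^{-1}v_j^2\in\langle F\rangle$.
\end{enumerate}
In particular $e_k=v_q\in\langle F\rangle$, so $B\subset\langle F\rangle$ and therefore $A=\langle F\rangle$.

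The main (minor) obstacle is conceptual rather than technical: one must spot that it is the \emph{children} of the bifurcation vertices, not the bifurcations themselves, that have to be added to $S$ to get a generating set. Squaring a non-bifurcation vertex recovers its unique descendant up to a nonzero scalar, so long descendant chains are traversable from a single generator; squaring a bifurcation, by contrast, only yields a linear combination of its children from which the individual children cannot be recovered, and this is precisely why those children must be included as generators.
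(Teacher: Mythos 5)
Your proof is correct and follows essentially the same route as the paper's: both augment $S$ by the ranges of the edges emitted by bifurcation vertices and then propagate along paths from $S$, using that a non-bifurcation vertex $v$ satisfies $v^2=\mu w$ with $\mu\in\K^\times$ for its unique child $w$ (by row-finiteness and Condition (Sing)). Your version is marginally leaner in that you do not need to add the bifurcation vertices themselves to the generating set and you handle the two cases inside a single induction along the path, whereas the paper inducts on the length of bifurcation-free path segments; the substance is identical.
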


\begin{proof}
If $A$ is finitely-generated, then there exists a finite set $S\subset E^0$ such that $E^0=\overline{S}$ by Lemma \ref{azul}.
Conversely, assume that $E$ contains only finitely many bifurcations and connected components and $E^0=\overline{S}$ for a finite set $S$.
Let $V\subset E^0$ be the set of all bifurcations. Then take $S'=S\cup V\cup\{r(v)\colon v\in V\}$. We still have $\overline{S'}=E^0$ and $S'$ is finite. If $u\notin S'$ and 
$f\in E^1$ with $s(f)\in S'$ and $r(f)=u$, then $s(f)$ is not a bifurcation so $s(f)^2=k u\ne 0$ with $k \in \K^{\times}$. Thus $u$ is in the subalgebra generated by $S'$. Now, assume that for any $u\notin S'$ with an arbitrary path $\lambda$ (without bifurcations) of length $n\geq 1$ such that $s(\lambda)\in S'$ and $r(\lambda)=u$, we have that $u$ is in the subalgebra generated by $S'$. Consider next an element $u\notin S'$ and a path $\mu$ (without bifurcations) of length $n+1$, whose source is in $S'$ and $r(\mu)=u$. Then 
writing $\mu=\lambda f$ with $\lambda$ of length $n$ we have:
\begin{enumerate}
    \item If $s(f)\in S'$ we have (because of the first induction step) that $u$ is in the subalgebra generated by $S'$. 
    \item If $s(f)\notin S'$, then $s(f)$ is in the subalgebra generated by $S'$ by the induction hypothesis. Furthermore, $s(f)$ is not a bifurcation whence $s(f)^2=h u\ne 0$ with $h \in \K^{\times}$. This implies that $u$ is in the subalgebra generated by $S'$.
\end{enumerate}
\end{proof}
The following examples concern  Proposition \ref{noche}.
\begin{example}\rm
Consider $A$ an evolution $\K$-algebra with natural basis $B=\{e_1,e_2,$$\\
$$e_3\}\cup \{e_i'\}_{i\in \N^*}$ with $e_1^2=e_2+e_3, e_2^2=0,e_3^2=0,e_i'^2=e_1$ for all $i\in \N^*$. Let $E$ be the associated directed graph to $A$. Observe that we only have just one bifurcation  (at $e_1$) but $A$ is not finitely-generated. Moreover, there does not exist a finite subset $S\subset E^0$ such that $E^0=\overline{S}$. 
\bigskip
\begin{equation*} 
   E:   \xymatrix{
   & {}    &     & {\bullet}_{e_{2}}   \\
   & {\bullet}_{e_{i}'} \ar@{.}@/^/[ur] \ar[r]& {\bullet}_{e_{1}} \ar[ur]  \ar[dr] &     \\
     & {\bullet}_{e_{2}'}  \ar[ur]  \ar@{.}@/^/[u]&   {\bullet}_{e_{1}'} \ar[u]  &   {\bullet}_{e_{3}}    \\
           }
           \end{equation*}

\noindent
We can consider the family of maximal ideals given by $I_k=\span{\{e_1,e_2,e_3,e'_{i}\colon \\ i\in \N^*\}\setminus \{e'_k\}}$ for all $k\in \N^*$.

\end{example}

\begin{example}\rm 
Consider $A$ an evolution $\K$-algebra with natural basis $B=\{e_0\}\cup\{e_i\}_{i\in \N^*}$ such that $e_0^2=0,e_i^2=e_0+e_{i+1}$ for all $i\in \N^*$. Observe that its associated graph  $E$, has infinite bifurcations but $A$ is finitely-generated by the subset $\{e_0,e_1\}$. Moreover, the set of vertices of $E$ is $E^0=\overline{\{e_1\}}$.
\bigskip
\begin{equation*} 
   E:   \xymatrix{
   &  & {\bullet}_{e_{0}} &  &  &  \\
     & {\bullet}_{e_{1}}  \ar[ur] \ar[r]  & {\bullet}_{e_{2}} \ar[u] \ar[r] & {\bullet}_{e_{3}} \ar[ul] \ar[r] & {\bullet}_{e_{4}} \ar[ull]  \ar@{.>}[r] &    \\
           }
           \end{equation*}
\noindent
Observe that $I=\span{\{e_0, e_{i+1}\colon i\in \N^*\}}$ is the unique maximal ideal of $A$.

\end{example}

\section{Ideals having the absorption property and Galois connection}\label{sec4}
In this section, we first focus on studying when an algebra has ideals having the absorption property. In addition, we characterizes the absorption property in terms of the graph. Then, we prove that the set of all hereditary and saturated subsets and the set of all ideals having the absorption property with certain maps form a Galois connection. For perfect evolution algebras, we obtain that this connection is monotone.

\subsection{Ideals having the absorption property}

Now, we give some statements related to ideals that have the absorption property.
\begin{proposition}\label{tarde} Let $A$ be a commutative $\K$-algebra. If $I$ is a maximal ideal of $A$ such that either $\dim(A/I)\ne 1$ or $A^2 \not\subset I$, then $I$ has the absorption property.  Consequently, if $A$ is a perfect commutative $\K$-algebra and $I$ is a maximal ideal, then  $I$ has the absorption property.
\end{proposition}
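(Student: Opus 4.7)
My plan is to argue by contradiction, exploiting maximality to force the only obstruction to absorption to live in a very rigid place.

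Suppose $x\in A$ satisfies $xA\subset I$ but $x\notin I$. First I would check that $I+\K x$ is an ideal of $A$. The only non-obvious part is absorbing products: $(I+\K x)A = IA+\K(xA)\subset I$, because $IA\subset I$ (as $I$ is an ideal) and $xA\subset I$ (by hypothesis). So $I+\K x$ is indeed an ideal of $A$ properly containing $I$ (since $x\notin I$), and by maximality $I+\K x=A$. In particular $\dim(A/I)=1$, since $A/I$ is spanned by the class of $x$, which is nonzero.

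This already handles the hypothesis $\dim(A/I)\neq 1$ directly: the assumption $x\notin I$ forces $\dim(A/I)=1$, a contradiction, so $x\in I$ and $I$ has the absorption property.

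For the hypothesis $A^2\not\subset I$, I would compute $A^2$ in the presentation $A=I+\K x$. Expanding,
\[
A^2=(I+\K x)(I+\K x)=I^2+I(\K x)+(\K x)I+\K x^2.
\]
Each summand lies in $I$: the first three because $I$ is an ideal, and the last because $x^2\in xA\subset I$. Thus $A^2\subset I$, contradicting the hypothesis. Again we conclude $x\in I$.

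For the consequence, if $A$ is perfect then $A^2=A$; in particular $A^2\not\subset I$ for any proper ideal $I$, so the second criterion applies to every maximal ideal $I$ of $A$. I do not foresee any serious obstacle here; the only subtle point is being careful that $I+\K x$ is literally an ideal (which requires the identity $xA\subset I$ for absorption on both sides, using commutativity of $A$) and that the argument does not implicitly assume the existence of a unit in $A$. Both are handled by the computation above.
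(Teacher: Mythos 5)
Your proof is correct and follows essentially the same route as the paper: both arguments hinge on observing that $I+\K x$ is an ideal (using $xA\subset I$), invoking maximality to force $I+\K x\in\{I,A\}$, and in the case $I+\K x=A$ deriving either $\dim(A/I)=1$ or $A^2=(I+\K x)^2\subset I$. Your version merely packages the case analysis as a proof by contradiction, and your explicit expansion of $(I+\K x)^2$ is if anything slightly cleaner than the paper's.
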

\begin{proof}
Assume that $xA\subset I$. Since $I \subset I + \K x$, $I + \K x$ is an ideal of $A$ and $I$ is a maximal ideal, we have two possibilities: $I + \K x = A$ or $I + \K x = I$. Now, we suppose that $\dim(A/I)\ne 1$. Then,  necessarily $I = I + \K x$ because otherwise $\dim(A/I)=1$. So, $x \in I$. Now, we assume that $A^2 \not \subset I$. Therefore, $I + \K x = A$ and this implies $A^2\subset I+\K x^2\subset I$, a contradiction. Then $I + \K x = I$, so $x \in I$. 
\end{proof}

\begin{proposition}\label{sabado3}
Let $A$ be a non-degenerate evolution $\K$-algebra and $H$ a hereditary and saturated set, then $I_H$ has the absorption property.
\end{proposition}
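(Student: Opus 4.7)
The plan is to unwind the definition of the absorption property and translate it into a combinatorial statement about the associated graph, where the hypotheses on $H$ will do the work.

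First I would take $x \in A$ with $xA \subset I_H$ and write $x = \sum_{i \in \Lambda} \alpha_i e_i$ in the natural basis. The goal reduces to showing $\Supp_B(x) \subset H$. The key computation is that for each $j \in \Lambda$,
\[
x e_j \;=\; \alpha_j \, e_j^2 \;\in\; I_H,
\]
since $B$ is a natural basis. So for every $j$ with $\alpha_j \neq 0$, one gets $e_j^2 \in I_H$.

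Next I would translate this membership into a graph-theoretic condition. Since $I_H = \bigoplus_{e_k \in H} \K e_k$, the statement $e_j^2 \in I_H$ means precisely that every basis element appearing with nonzero coefficient in $e_j^2$ lies in $H$; equivalently, $r(s^{-1}(e_j)) \subset H$. Here is where I need $e_j$ to be a regular vertex in order to feed this into the definition of saturated set: this is exactly where non-degeneracy of $A$ enters. Indeed, non-degeneracy gives $\ann(A) = \span\{e_i : e_i^2 = 0\} = 0$, so no basis vector squares to zero, i.e., every vertex of $E$ has at least one outgoing edge and is therefore regular. Now saturation of $H$ applied to $e_j$ yields $e_j \in H$.

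Putting the pieces together, every $j$ with $\alpha_j \neq 0$ satisfies $e_j \in H$, so $x \in I_H$, which is the absorption property. I do not expect any serious obstacle: the argument is essentially a one-line application of the saturation condition once the computation $xe_j = \alpha_j e_j^2$ is in hand. The only subtle point is to notice that saturation is stated only for regular vertices, and to justify that all vertices are regular by invoking the non-degeneracy hypothesis rather than sweeping this under the rug.
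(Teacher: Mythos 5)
Your proof is correct and follows essentially the same route as the paper's: compute $xe_j=\alpha_j e_j^2\in I_H$, translate membership in $I_H$ into $r(s^{-1}(e_j))\subset H$, and invoke saturation. You are in fact slightly more careful than the paper in spelling out why non-degeneracy (together with row-finiteness of the associated graph) makes every vertex regular so that the saturation condition applies.
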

\begin{proof}
    Let $B=\{e_i\}_{i\in \Lambda}$ be a natural basis of $A$. Consider $xA\subset I_H$ with $0\neq x=\sum_{i\in\Supp(x)}\lambda_i e_i$. Since $0\neq x e_i=\lambda_i e_i^2\in I_H =\oplus_{e\in H}\K e$ for all $i\in \Supp(x)$ we have that $r(s^{-1}(e_i))\subset H$  with $e_i^2\neq 0$. So, $e_i\in H$ for all $i\in \Supp(x)$ because $H$ is a saturated set, implying that $x\in I_H$.
\end{proof}

The condition of non-degenerate can not be eliminated as we show in the next example. 
\begin{example}\rm
Consider $A$ an evolution $\K$-algebra with natural basis $B=\{e_i\}_{i=1}^4$ such that $e_1^2=e_2^2=e_3^2=e_3$ and $e_4^2=0$. Observe that $A$ is degenerate. The subset $H=\{e_1,e_2,e_3\}$ is hereditary and saturated. If we take $x=e_1+e_4$, then $\K e_3=xA\subset I_H$ but $x\notin I_H$.
\end{example}

\begin{proposition} \label{propnueva}
    Let $A$ be an evolution $\K$-algebra and $H\in \H$. If $I_H$ has the absorption property, then $H$ is a saturated set. 
\end{proposition}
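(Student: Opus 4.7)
The plan is to show directly that $H$ is saturated by testing the absorption property on the very vertex we want to land in $H$. Fix a regular vertex $u \in E^0$ with $r(s^{-1}(u)) \subset H$; we must prove $u \in H$.

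First I would observe that, since $u$ is regular, $u^2 \neq 0$, and $u^2$ is a (finite) $\K$-linear combination of elements of $r(s^{-1}(u))$ by the definition of the associated graph. Because $r(s^{-1}(u)) \subset H$, this gives $u^2 \in I_H$.

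Next, the crucial step: compute $uA$. Writing $u = e_i$ for a suitable index $i$, and using that $B=\{e_j\}_{j\in\Lambda}$ is a natural basis (so $e_i e_j = 0$ for $j \neq i$), any element $a = \sum_j a_j e_j \in A$ satisfies $ua = a_i e_i^2 = a_i u^2$. Hence $uA \subset \K u^2 \subset I_H$. Applying the absorption property of $I_H$ to $x = u$ yields $u \in I_H$. Since $u$ is itself a basis vector and $I_H = \bigoplus_{e \in H} \K e$, this forces $u \in H$, which is exactly saturation.

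There is essentially no obstacle here: the whole argument rests on the small observation that for a natural basis element $u$ the principal ideal $uA$ collapses to $\K u^2$, so absorption applied to $u$ immediately converts the hypothesis $u^2 \in I_H$ into $u \in H$. No extra hypothesis such as non-degeneracy is required, which nicely complements Proposition \ref{sabado3}.
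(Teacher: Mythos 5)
Your proof is correct and follows the same route as the paper's: test absorption on the regular vertex $u$ itself, noting $u^2\in I_H$ and $uA\subset\K u^2\subset I_H$, then conclude $u\in I_H\cap B=H$. The only difference is that you spell out explicitly why $uA$ collapses to $\K u^2$, which the paper leaves implicit.
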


\begin{proof}
    Let $B=\{e_i\}_{i\in \Lambda}$ be a natural basis of $A$. Suppose that $e\in B$ with $e^2\neq 0$ and $r(s^{-1}(e))\subset H$. Then we have that $e^2=\sum_{e_j\in H}\l_je_j \in I_H$. Therefore $e A \subset I_H$ and so $e \in I_H$ because $I_H$ has the absorption property. By item (9) of Proposition \ref{sabado1}, we get $e \in H$.
\end{proof}

Observe that if $A$ is a non-degenerate evolution $\K$-algebra, then $H$ is a hereditary and saturated subset if and only if $I_H$ has the absorption property.

The following theorem characterizes the ideals having the absorption property.

\begin{theorem}\label{carrillada}
    Let $A$ be an evolution $\K$-algebra with natural basis $B=\{e_i\}_{i\in\Lambda}$. The following assertions are equivalent:
    \begin{enumerate}
    \item $I$ has the absorption property.
    \item $H_I=I\cap B$. 
   \item  $I=I_{H_I}$.
    \end{enumerate}
\end{theorem}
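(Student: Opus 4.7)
The plan is to prove the equivalence as a cycle $(1)\Rightarrow(2)\Rightarrow(3)\Rightarrow(1)$. Throughout, I will freely use item (4) of Proposition \ref{sabado1}, which already gives $I\subset I_{H_I}$ and $H\subset H_{I_H}$ for free; this reduces several of the implications to single inclusions.

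For $(1)\Rightarrow(2)$: The inclusion $I\cap B\subset H_I$ holds for any ideal, because $e_i\in I$ forces $e_i^2=e_i\cdot e_i\in I$, whence $e_i\in H_I$. The absorption hypothesis is what I need for the reverse inclusion. The key observation is that in an evolution algebra, $e_iA=\K e_i^2$, since $e_ie_j=0$ for $j\neq i$. Thus if $e_i\in H_I$, meaning $e_i^2\in I$, then $e_iA\subset I$, and absorption gives $e_i\in I$, so $e_i\in I\cap B$.

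For $(2)\Rightarrow(3)$: By item (4) of Proposition \ref{sabado1} I already have $I\subset I_{H_I}$. For the reverse, $I_{H_I}=\span{H_I}=\span{I\cap B}$ by hypothesis, and $\span{I\cap B}\subset I$ trivially since $I$ is a subspace containing $I\cap B$.

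For $(3)\Rightarrow(1)$: Assume $I=I_{H_I}$ and take $x\in A$ with $xA\subset I$. Write $x=\sum_{i\in\Supp_B(x)}\lambda_ie_i$ with each $\lambda_i\neq 0$. For each $j\in\Supp_B(x)$, the product $xe_j=\lambda_je_j^2$ lies in $I$, so $e_j^2\in I$ (either because $\lambda_j\neq 0$ and we divide, or because $e_j^2=0\in I$ automatically). Hence $e_j\in H_I$ for every $j\in\Supp_B(x)$, which places $x$ in $\span{H_I}=I_{H_I}=I$.

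I do not anticipate any real obstacle; the only mildly delicate point is remembering in $(3)\Rightarrow(1)$ that the case $e_j^2=0$ does not cause trouble, since then $e_j\in H_I$ vacuously. The whole argument is essentially a direct manipulation of the defining equalities, leveraging the crucial evolution-algebra identity $e_iA=\K e_i^2$ that makes the absorption condition interact so cleanly with natural bases.
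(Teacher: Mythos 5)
Your proposal is correct and follows essentially the same route as the paper's own proof: the cycle $(1)\Rightarrow(2)\Rightarrow(3)\Rightarrow(1)$, using the identity $e_iA=\K e_i^2$ for the first implication, item (4) of Proposition \ref{sabado1} plus $\span{I\cap B}\subset I$ for the second, and the computation $xe_j=\lambda_je_j^2\in I$ for the third. Your explicit remark that the case $e_j^2=0$ is harmless is a minor clarification the paper leaves implicit, but it does not change the argument.
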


\begin{proof}
Suppose that (1) holds true and take $e\in H_I$. So $e^2\in I$ and $e A\subset I$. Since $I$ has the absorption property, $e\in I$ and $e\in I\cap B$. Now, if $e\in I\cap B$, then $e^2\in I$. Hence, $e\in H_I$. Now, assume that (2) is true. We know that $I\subset I_{H_I}$ by item (4) of Proposition \ref{sabado1}. Let $x=\sum_{e_i\in H_I} \lambda_i e_i \in I_{H_I}$. Since $H_I=I\cap B$, then $x\in I$. Next, suppose that $I=I_{H_I}$. We consider $0\neq x\in A$ such that $x A \subset I$. We write $x=\sum_{i\in \Supp(x)}\l_ie_i$. So, $xe_i=\l_ie_i^2\in I$ for all $i\in \Supp(x)$. Therefore $e_i\in H_I \subset I_{H_I}=I$ for all $i\in \Supp(x)$ and this implies that $x\in I$.

\end{proof}

Proposition \ref{sabado3} and Theorem \ref{carrillada} give a way to find ideals having the absorption property just looking for the hereditary and saturated sets of the associated graph in the case of  non-degenerate evolution algebras.

\begin{corollary}
    Let $A$ be an evolution $\K$-algebra with natural basis $B$. If $I$ is a maximal ideal with $A^2\not \subset I$, then $I$ has the absorption property and $H_I=I\cap B$ is a saturated set.
\end{corollary}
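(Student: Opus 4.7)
The plan is to chain together three results already established in the paper, so the proof should be short and almost mechanical; the role of each hypothesis is what matters.

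First, I would invoke Proposition \ref{tarde} directly. Since $I$ is maximal and $A^2 \not\subset I$, that proposition applies (the commutative hypothesis is automatic for evolution algebras) and yields immediately that $I$ has the absorption property. This handles the first conclusion of the corollary with essentially no work.

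Next, with the absorption property in hand, I would apply Theorem \ref{carrillada}: condition (1) implies condition (2), so $H_I = I \cap B$, which is the second claim. The same theorem, via the equivalence with condition (3), also gives $I = I_{H_I}$, and this last identity is precisely what is needed to invoke Proposition \ref{propnueva}: since $I_{H_I} = I$ has the absorption property, that proposition concludes $H_I$ is saturated. Recall that $H_I$ is hereditary by Lemma \ref{bocata}(2), so ``saturated set'' here really means hereditary and saturated.

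There is no real obstacle; the corollary is a packaging of Proposition \ref{tarde} (to get absorption from maximality under the hypothesis $A^2 \not\subset I$), Theorem \ref{carrillada} (to translate absorption into the equality $H_I = I \cap B$ and the identity $I = I_{H_I}$), and Proposition \ref{propnueva} (to extract saturation of $H_I$ from absorption of $I_{H_I}$). The only thing one should be careful about is that the hypothesis $A^2 \not\subset I$ is used exactly once, namely to activate Proposition \ref{tarde}; everything afterwards is purely formal.
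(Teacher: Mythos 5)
Your proof is correct and follows the route the paper clearly intends: Proposition \ref{tarde} (with the hypothesis $A^2\not\subset I$) gives the absorption property, and Theorem \ref{carrillada} gives $H_I=I\cap B$ and $I=I_{H_I}$. For the saturation step the paper's item (8) of Proposition \ref{sabado1} yields the conclusion directly from $H_I=I\cap B$, while you route it through Proposition \ref{propnueva} via $I=I_{H_I}$; both are valid one-line finishes, so the argument is essentially the same.
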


We see that if $A^2\subset I$, the previous corollary is not true  in the next example .
\begin{example}\rm
Assume $A$ is an evolution $\K$-algebra with natural basis $B=\{e_i\}_{i=1}^4$ such that $e_1^2=e_2^2=e_3^2=e_4^2=e_3$. Consider the maximal ideal $I=\oplus_{i=1}^3\K e_i$ of codimension $1$.   If we take $x=e_4$, then $\K e_3=xA\subset I$ but $x\notin I$, so $I$ does not have the absorption property. Moreover, since $A^2=I$ and $I=I_H$ with $H=\{e_1,e_2,e_3\}$, we have that $H_I=B\neq I\cap B=\{e_1,e_2,e_3\}$.
\end{example}

\begin{theorem}\label{fiesta}
    Let $A$ be a  finite-dimensional perfect evolution $\K$-algebra and $I$ an ideal of $A$. Then $I=I_{H_I}$ and $I$ has the absorption property.  
\end{theorem}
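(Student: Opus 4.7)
The plan is to show first that $I = I_{H_I}$; the absorption property of $I$ will then follow at once from Theorem \ref{carrillada}. The inclusion $I \subset I_{H_I}$ is already recorded in Proposition \ref{sabado1}(4), so everything reduces to proving the reverse inclusion $I_{H_I} \subset I$.

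Set $H := H_I$, which is hereditary by Lemma \ref{bocata}(2), and let $K := B \setminus H$. Order the natural basis so that the elements of $H$ come first, followed by those of $K$. For any $e \in H$, the hereditary condition gives $T(e) \subset H$, whence $\Supp(e^2) \subset H$. In terms of the structure matrix $M_B = (\omega_{ij})$ this says exactly that the lower-left block (rows indexed by $K$, columns indexed by $H$) vanishes, so $M_B$ is block upper triangular:
\[
M_B \;=\; \begin{pmatrix} M_{HH} & M_{HK} \\ 0 & M_{KK} \end{pmatrix},
\]
and consequently $\det(M_B) = \det(M_{HH}) \cdot \det(M_{KK})$.

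Since $A$ is finite-dimensional and perfect, $\det(M_B) \neq 0$, hence $\det(M_{HH}) \neq 0$. Therefore the family $\{e^2 : e \in H\}$ is linearly independent inside $I_H$, and as it has cardinality $|H| = \dim I_H$ it is a basis of $I_H$. By the very definition of $H_I$, each of these basis vectors lies in $I$, so $I_H \subset I$, giving the desired equality $I = I_{H_I}$; the absorption property then follows from Theorem \ref{carrillada}. The only real subtlety is recognizing that the hereditariness of $H_I$ delivers precisely the block-triangular shape needed to transfer invertibility from $M_B$ to the principal block $M_{HH}$; this is the bridge between the graph-theoretic hereditary condition and the linear-algebraic content of perfectness, and it is what makes the argument go through without any further hypothesis on $I$.
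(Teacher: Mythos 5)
Your proof is correct, and its key step is genuinely different from the paper's. The paper establishes $I=I_{H_I}$ by a dimension count: perfectness makes the squares $\{e_i^2 : e_i\in H_I\}$ linearly independent elements of $I$, so $|H_I|\le\dim I$, and a contradiction argument (showing that no basis vector of $I$ can have a nonzero coordinate on some $e_j\notin H_I$, essentially re-running the proof of item (4) of Proposition \ref{sabado1}) gives $\dim I\le |H_I|=\dim I_{H_I}$; equality of dimensions together with $I\subset I_{H_I}$ finishes. You instead prove the reverse inclusion $I_{H_I}\subset I$ directly: hereditariness of $H_I$ forces the structure matrix to be block upper triangular, so the invertibility of $M_B$ localizes to $\det(M_{HH})\neq 0$, and hence $\{e^2 : e\in H_I\}$ is a basis of $I_{H_I}$ consisting of elements of $I$. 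Both routes rest on the same three ingredients ($I\subset I_{H_I}$ from Proposition \ref{sabado1}, perfectness, and Theorem \ref{carrillada} for absorption), but yours replaces the contradiction/dimension argument by the determinant factorization $\det(M_B)=\det(M_{HH})\det(M_{KK})$; this is slightly more structural, and it yields the additional information that $I$ is spanned by squares of natural basis vectors, whereas the paper's argument needs only the global invertibility of $M_B$ and no block decomposition.
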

\begin{proof}
    First, we prove that $\dim(I)=\vert H_I\vert$. For this purpose, we consider $B=\{e_i\}_{i=1}^n$ a natural basis of $A$, $H_I=\{e_i\,\colon\, e_i^2\in I\}$ and $\{v_j\}_{j=1}^s$ a basis of $I$. Let us denote $N:=\vert H_I\vert$ and $H_I=\{e_i\}_{i=1}^N$.  Since  $\{e_i^2\}_{i=1}^N \subset I$ is a linear independent set therefore $N \leq \dim(I)=s$. Suppose that $N < s$. This implies that there exists $l\in\{1,\ldots,s\}$ with $v_l=\sum_{k=1}^N \mu_{kl}e_{k} +\sum_{k=N+1}^n \mu_{kl}e_{k}\in I $ such that $\mu_{jl}\neq 0$ for some  $j\in \{N+1,\ldots,n\}$, otherwise, we would have that the linearly independent set $\{v_j\}_{j=1}^s$ can be written as a linear combination of a smaller linearly independent set $\{e_k\}_{k=1}^N$ which is impossible. 
    But, $v_le_j=\mu_{jl} e_j^2 \in I$, a contradiction because $e_j \notin H_I$. Then $\dim(I)=\vert H_I\vert$. Since $I \subset I_{H_I} $ necessarily $I=I_{H_I}.$ For the second statement we apply Theorem \ref{carrillada}.
\end{proof}

Observe that all  nonzero ideals of a finite-dimensional  perfect evolution $\K$-algebra  have the absorption property. Furthermore, they verify $I=I_{H_I}$.

As a consequence of Theorem \ref{fiesta} we recover \cite[Proposition 4.2]{boudi20}, with a different approach, concerning  the ideals of a finite-dimensional perfect evolution $\K$-algebra.

\begin{corollary}\label{propbasic}
    Let $A$ be a perfect evolution $\K$-algebra of finite dimension. Every nonzero ideal $I$ of $A$ is a basic ideal.
\end{corollary}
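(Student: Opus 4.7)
The plan is to derive everything directly from Theorem \ref{fiesta} together with Definition \ref{defHI}. Under the hypotheses (finite-dimensional, perfect), Theorem \ref{fiesta} already gives the key identification $I = I_{H_I}$ for any ideal $I$ of $A$. What remains is just to read off that this presentation of $I$ exhibits a natural basis which is a subset of the fixed natural basis $B$ of $A$.

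First, I would fix a natural basis $B = \{e_i\}_{i=1}^n$ of $A$ and apply Theorem \ref{fiesta} to conclude $I = I_{H_I}$. By Definition \ref{defHI}, this means
\[
I \;=\; \bigoplus_{e_i \in H_I} \K e_i \;=\; \operatorname{span}(H_I),
\]
so $H_I$ is a $\K$-basis of $I$. Since $H_I \subseteq B$ and $B$ is a natural basis of $A$, one has $e_i e_j = 0$ for all distinct $e_i, e_j \in H_I$, so $H_I$ is a natural basis of $I$; thus $I$ is an evolution ideal.

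Finally, because $H_I \subseteq B$, the natural basis $H_I$ of $I$ can trivially be extended to the natural basis $B$ of the whole algebra $A$ (just adjoin the elements of $B \setminus H_I$). By the definition of basic ideal given in Section \ref{sec2}, $I$ is a basic ideal, which is the claim.

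There is essentially no obstacle here: the entire content is packaged in Theorem \ref{fiesta}. The only subtlety worth flagging explicitly is that $H_I$ really is a set of basis vectors of $A$ (not merely a spanning set of $I$), and that being a subset of a natural basis is exactly what the definition of \emph{basic} requires; both are immediate from Definition \ref{defHI} and the definition of basic ideal.
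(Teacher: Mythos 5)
Your proof is correct and follows essentially the same route as the paper: both deduce $I=I_{H_I}=\operatorname{span}(H_I)$ from Theorem \ref{fiesta} and then observe that a spanning subset of the natural basis $B$ makes $I$ basic. The only cosmetic difference is that the paper cites item (1) of Lemma \ref{bocata} for the last step, whereas you unpack that argument by hand.
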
 
\begin{proof}
   Let $B=\{e_i\}_{i=1}^n$ be a natural basis of $A$. By Theorem \ref{fiesta}, $I=\span{H_I}$ and $H_I \subset B$ and this implies that  $I$ is a basic ideal by item (1) of Lemma \ref{bocata}. 
\end{proof}

\subsection{Galois Connection}

Recall that for two partially ordered sets  $(A, \leq)$ and $(B, \leq)$ we can define a \emph{Galois connection} between these sets consisting of two  functions: $F\colon A \to B$ and $G\colon B  \to A$, such that for all $a \in A$ and $b \in B$, we have $F(a) \leq b$ if and only if $a\leq  G(b)$. It is said that $F$ is the \textit{left adjoint} and $G$ is the \textit{right adjoint} of the Galois connection. If these functions are monotone, we have a \emph{monotone Galois connection}.

\begin{definition} \label{definicionapl} \rm
Let $A$ be an evolution $\K$-algebra. We define the maps $\mathfrak{f}:\H\to\Id$ and $\mathfrak{h}:\Id\to\H$ such that $H\mapsto I_H$ and $I\mapsto H_I$ respectively.
\end{definition}

Note that $\mathfrak{h}$ and $\mathfrak{f}$ are order preserving maps. Some results about these applications are the following.

\begin{proposition}\label{inyective}
The map $\mathfrak{f}$ is inyective and strictly monotone in the sense that $H\subsetneq H'$ implies $I_H\subsetneq I_{H'}$. 
\end{proposition}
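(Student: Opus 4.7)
My plan is to deduce both statements from the definition $I_H=\bigoplus_{e_i\in H}\K e_i$ together with item (9) of Proposition \ref{sabado1}, which asserts that $H=I_H\cap B$ whenever $H\in\H$. These two ingredients make the proof entirely formal; there is no real obstacle, only a clean assembly.

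First I would establish (weak) monotonicity: if $H\subset H'$ are hereditary, then clearly
\[
I_H\;=\;\bigoplus_{e_i\in H}\K e_i\;\subset\;\bigoplus_{e_i\in H'}\K e_i\;=\;I_{H'},
\]
since every direct summand on the left appears on the right. This is immediate from the definition of $\mathfrak{f}$, and does not use hereditariness.

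Next I would prove strict monotonicity. Assume $H\subsetneq H'$ and pick some $e\in H'\setminus H$. Then $e\in I_{H'}$ because $e$ is one of the generators in the direct-sum decomposition of $I_{H'}$. On the other hand, $e$ is a vector of the natural basis $B$, and $I_H$ is the span of the basis vectors lying in $H$; since $B$ is linearly independent and $e\notin H$, we get $e\notin I_H$. Combined with weak monotonicity this gives $I_H\subsetneq I_{H'}$.

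Finally, injectivity. Suppose $H,H'\in\H$ with $I_H=I_{H'}$. By item (9) of Proposition \ref{sabado1}, $H=I_H\cap B$ and $H'=I_{H'}\cap B$, so $H=I_H\cap B=I_{H'}\cap B=H'$. Thus $\mathfrak{f}$ is injective. (Alternatively, injectivity is already forced by strict monotonicity together with the observation that any pair of distinct hereditary subsets $H\ne H'$ cannot have $I_H=I_{H'}$, since any element in the symmetric difference of $H$ and $H'$ lies in exactly one of $I_H$, $I_{H'}$ by the same linear-independence argument as above.)
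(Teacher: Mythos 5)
Your proof is correct and rests on the same key fact the paper uses, namely that $H$ can be recovered from $I_H$ by intersecting with the natural basis (item (9) of Proposition \ref{sabado1}, which is also the content of the injectivity argument inside the proof of item (3) of Proposition \ref{sabado2} that the paper cites). The only cosmetic difference is that you prove strict monotonicity directly by exhibiting an element of $H'\setminus H$, while the paper deduces it from injectivity plus weak monotonicity; both are the same argument in substance.
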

\begin{proof}
We proved the inyectivity in the proof of item (3) of Proposition \ref{sabado2}. As a consequence, 

the map $H\mapsto I_H$ is strictly monotone.
\end{proof}

\begin{corollary}
Let $A$ be  a  finite-dimensional perfect evolution $\K$-algebra  with two ideals $I,J$ such that $H_I=H_J$, then $I=J$. In other words, $\mathfrak{h}$ is inyective.  
\end{corollary}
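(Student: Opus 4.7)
The plan is to invoke Theorem \ref{fiesta} directly. Since $A$ is a finite-dimensional perfect evolution $\K$-algebra, that theorem tells us that every ideal $I$ of $A$ satisfies the identity $I = I_{H_I}$. In other words, the composition $\mathfrak{f}\circ \mathfrak{h}$ is the identity on $\Id$ in this setting, which is exactly the sort of statement that forces $\mathfrak{h}$ to be injective.

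More concretely, I would argue as follows. Let $I, J \in \Id$ with $H_I = H_J$. Applying Theorem \ref{fiesta} to $I$ and to $J$ separately, we obtain $I = I_{H_I}$ and $J = I_{H_J}$. Substituting the hypothesis $H_I = H_J$ yields $I_{H_I} = I_{H_J}$, and hence $I = J$. This shows $\mathfrak{h}\colon \Id \to \H$ is injective.

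There is really no obstacle here beyond noting that the finite-dimensional perfect hypothesis is precisely what makes Theorem \ref{fiesta} applicable to \emph{every} ideal (not just those having the absorption property). One could alternatively phrase the argument via Proposition \ref{inyective}: since $\mathfrak{f}$ is injective, the equality $\mathfrak{f}(H_I) = \mathfrak{f}(H_J)$ obtained from $I = I_{H_I} = I_{H_J} = J$ reasoning backwards confirms consistency, but the direct route above is the shortest.
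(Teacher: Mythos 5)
Your argument is correct and is exactly the paper's proof, which simply cites Theorem \ref{fiesta}: applying $I=I_{H_I}$ and $J=I_{H_J}$ together with $H_I=H_J$ immediately gives $I=J$. The extra remark about Proposition \ref{inyective} is unnecessary but harmless.
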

\begin{proof}
It is straightforward from Theorem \ref{fiesta}.
\end{proof}

Notice that the map $\mathfrak{h}$ is not necessarily inyective if the algebra is non perfect. To see this, consider the evolution $\K$-algebra with natural basis $B=\{e_1,e_2,e_3\}$ with $e_1^2=e_1+e_2,e_2^2=e_1+e_2,e_3^2=e_1+e_2+e_3$ and the ideals $I=\span{\{e_1+e_2\}}$ and $J=\span{\{e_1,e_2\}}$. We have $H_I=\{e_1,e_2\}=H_J$ and $I\neq J$.

Note that $(\H,\subset)$ and $(\Id,\subset)$ are two partially ordered sets. 

\begin{proposition}\label{sabado4}
    Let $A$ be an evolution $\K$-algebra and $B=\{e_i\}_{i\in \Lambda}$ a natural basis. If all the elements of $\Id$ have the absorption property, then $\mathfrak{f},\mathfrak{h}$ is a monotone Galois connection between $(\H,\subset)$ and $(\Id,\subset)$. 
\end{proposition}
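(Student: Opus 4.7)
The plan is to reduce the Galois-adjunction identity to the structural description of absorption ideals already packaged in Theorem~\ref{carrillada}. Concretely, monotonicity of $\mathfrak{f}$ and $\mathfrak{h}$ is essentially by construction: if $H\subset H'$, then $I_H=\bigoplus_{e_i\in H}\K e_i\subset\bigoplus_{e_i\in H'}\K e_i=I_{H'}$; and if $I\subset J$, then any $e_i$ with $e_i^2\in I$ satisfies $e_i^2\in J$, giving $H_I\subset H_J$. So the only substantive thing to check is the adjunction equivalence
\[
I_H\subset I\quad\Longleftrightarrow\quad H\subset H_I,\qquad H\in\H,\ I\in\Id.
\]

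For the forward implication, I would not need the absorption hypothesis at all. Suppose $I_H\subset I$ and pick $e\in H$. Then $e\in I_H\subset I$, and since $I$ is an ideal, $e^2=e\cdot e\in I$, hence $e\in H_I$ by Definition~\ref{defHI}. This gives $H\subset H_I$.

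For the converse, this is where the blanket assumption that every $I\in\Id$ has the absorption property enters. Fix $I\in\Id$ and $H\in\H$ with $H\subset H_I$. By Theorem~\ref{carrillada}, the absorption property for $I$ is equivalent to $I=I_{H_I}$. Since $\mathfrak{f}$ is (strictly) order preserving (Proposition~\ref{inyective}), $H\subset H_I$ gives $I_H\subset I_{H_I}=I$, as wanted.

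Putting the two implications together yields the adjunction, and combined with the monotonicity of both maps, this is exactly the definition of a monotone Galois connection between $(\H,\subset)$ and $(\Id,\subset)$. The only ``obstacle'' is making sure the absorption hypothesis is really used where needed, i.e. in the $\Leftarrow$ direction where one must upgrade the containment $I_H\subset I_{H_I}$ to $I_H\subset I$; this is precisely what Theorem~\ref{carrillada} grants us under the blanket assumption, so the argument closes without additional technical work.
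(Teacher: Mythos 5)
Your proof is correct and follows essentially the same route as the paper: the forward implication is the same elementary computation ($e\in I_H\subset I$ gives $e^2\in I$, so $e\in H_I$), and the converse uses exactly the paper's key step, namely that the absorption hypothesis together with Theorem~\ref{carrillada} yields $I=I_{H_I}$, so $H\subset H_I$ forces $I_H\subset I_{H_I}=I$. No gaps.
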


\begin{proof}
    We have to prove that $I_H\subset I \Leftrightarrow H\subset H_I$ for any hereditary $H$ and any ideal $I$. Assume $I_H\subset I$ and $e\in H$. Then, $e\in I_H\subset I$ and $e^2\in I$. So, $e\in H_I$ as we wanted. Now, consider $H\subset H_I$ and $x=\sum_{e_i\in H}\lambda_i e_i \in I_H$. As $H\subset H_I$ hence $x\in I_{H_I}$ but since $I$ has the absorption property then $I_{H_I}=I$ by Theorem \ref{carrillada}. Therefore $x\in I$ as we wanted to show.
\end{proof}

This brings us to define $\H^*\subset \H$ to be the ordered subset of all hereditary and saturated sets and  $\Id^*\subset\Id$ the ordered subset of all ideals having the absorption property. 
\begin{theorem}\label{Galois}
    Let $A$ be a non-degenerate evolution $\K$-algebra and $B$ a natural basis. We have that:
    \begin{enumerate}     
    \item The restrictions $\mathfrak{f}\colon \H^*\to\Id^*$  and $\mathfrak{h}\colon \Id^*\to\H^*$ are a monotone Galois connection.
\item If $H_i \in \H^*$ for all $i\in \Gamma$ and $\cup_{i\in \Gamma}H_i\in \H^*$, then $I_{\cup_{i\in \Gamma}H_i}=\sum_{i\in \Gamma}I_{H_i}$.
    \item If $\{I_i\}_{i\in \Omega}$ is a collection of ideals having the absorption property, then $H_{\cap_{i\in \Omega} I_i}=\cap_{i\in \Omega}H_{I_i}$.
    \item  If $A$ is finite-dimensional with  $A=A^2$, then $\mathfrak{f},\mathfrak{h}$ is a monotone Galois connection between $(\H,\subset)$ and $(\Id,\subset)$.
    \end{enumerate}
\end{theorem}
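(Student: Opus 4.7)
The plan is to address each of the four items in turn, leveraging the preparatory material already developed in this section.

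For item (1), the first task is to verify that the restricted maps are well-defined. That $\mathfrak{f}\colon\H^*\to\Id^*$ lands in $\Id^*$ is precisely Proposition \ref{sabado3}, which uses the non-degeneracy hypothesis on $A$. For $\mathfrak{h}\colon\Id^*\to\H^*$, if $I\in\Id^*$, then Theorem \ref{carrillada} gives $H_I=I\cap B$, and item (8) of Proposition \ref{sabado1} then ensures $H_I$ is saturated; hereditariness is automatic by Lemma \ref{bocata}. The Galois equivalence $I_H\subset I\Leftrightarrow H\subset H_I$ now follows as in Proposition \ref{sabado4}, since that proof only uses the absorption property of $I$, which is guaranteed on $\Id^*$. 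As a bonus observation worth recording, on the restricted domains one actually has $\mathfrak{h}\circ\mathfrak{f}=\mathrm{id}_{\H^*}$ by item (7) of Proposition \ref{sabado1} and $\mathfrak{f}\circ\mathfrak{h}=\mathrm{id}_{\Id^*}$ by Theorem \ref{carrillada}, so the Galois connection is actually an order isomorphism.

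For item (2), since $I_H=\bigoplus_{e\in H}\K e$ for any hereditary $H$, a single line suffices: $I_{\bigcup_i H_i}=\bigoplus_{e\in \bigcup_i H_i}\K e=\sum_i \bigoplus_{e\in H_i}\K e=\sum_i I_{H_i}$. Item (3) is equally direct from the definition: $e\in H_{\bigcap_i I_i}$ iff $e^2\in\bigcap_i I_i$ iff $e^2\in I_i$ for every $i\in\Omega$ iff $e\in H_{I_i}$ for every $i$ iff $e\in\bigcap_i H_{I_i}$.

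For item (4), when $A$ is finite-dimensional and $A=A^2$, Theorem \ref{fiesta} asserts that every ideal of $A$ has the absorption property, hence $\Id^*=\Id$. Proposition \ref{sabado4} then applies verbatim, yielding a monotone Galois connection between $(\H,\subset)$ and $(\Id,\subset)$. There is no serious obstacle here; the main difficulty, modest as it is, is bookkeeping the hypotheses: item (1) needs non-degeneracy so that $\mathfrak{f}$ lands in $\Id^*$, while item (4) invokes the stronger finite-dimensional perfect hypothesis to collapse $\Id^*$ into $\Id$. Each clause reduces to a clean application of results proved earlier.
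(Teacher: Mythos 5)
Your proof is correct. Items (1) and (4) follow exactly the paper's route: well-definedness of the restricted maps via Proposition \ref{sabado3}, Theorem \ref{carrillada} and item (8) of Proposition \ref{sabado1}, then the equivalence $I_H\subset I\Leftrightarrow H\subset H_I$ as in Proposition \ref{sabado4}; and for (4), Theorem \ref{fiesta} collapses $\Id^*$ into $\Id$ so Proposition \ref{sabado4} applies directly. Where you diverge is in items (2) and (3): the paper deduces both from a general lattice-theoretic fact (left adjoints of a Galois connection preserve existing joins, right adjoints preserve meets, citing Davey--Priestley, Proposition 7.31), whereas you give direct one-line computations from the definitions $I_H=\bigoplus_{e\in H}\K e$ and $H_I=\{e\in B\colon e^2\in I\}$. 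Your route is more elementary and in fact proves slightly more, since neither computation actually uses the hypotheses that the $H_i$ and $\bigcup_i H_i$ are saturated or that the $I_i$ have the absorption property; the paper's route buys conceptual economy by placing (2) and (3) as formal consequences of (1). Your additional observation that the restricted connection is an order isomorphism (via $H_{I_H}=H$ for saturated $H$ by item (7) of Proposition \ref{sabado1}, and $I_{H_I}=I$ for absorbing $I$ by Theorem \ref{carrillada}) is valid and is a genuine strengthening not stated in the paper.
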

\begin{proof}
    First, notice that $\mathfrak{f}$ and $\mathfrak{h}$ are well defined by Proposition \ref{sabado3}, Theorem \ref{carrillada} and item (8) of Proposition \ref{sabado1}. Now, applying Proposition \ref{sabado4} we get that they are a monotone Galois connection. We deduce item (2) and (3) from \cite[7.31 Proposition]{Galois}. Item (4) is straightforward from Theorem \ref{fiesta} and Proposition \ref{sabado4}.
\end{proof}

\section{Evolution algebras with simple associated graph}\label{sec6}

In this section, we will investigate the evolution $\K$-algebras with just one maximal ideal concerning its associated graph.
We start by studying perfect evolution algebras. In this case, by Corollary \ref{blanco}, we know that all maximal ideals come from maximal  hereditary  subsets. Proposition \ref{inyective} assures us that a maximal ideal comes from a unique  maximal  hereditary subset. So, we investigate the graphs associated to perfect evolution $\K$-algebras with just one maximal hereditary  subset.
Given a directed graph $E$ and $H\in\H$ we can define the quotient graph $E/H=(F^0,F^1,r_F,s_F)$ such that 
$F^0=E^0\setminus H$, $F^1:=\{f\in E^1\colon s(f),r(f)\notin H\}$, and 
$s_F=s_E\vert_{F^1}$, $r_F=r_E\vert_{F^1}$.

\begin{lemma}\label{miercoles}
    Consider  a graph $E$ and $H\in\H_E$. Then
    $$ \{H'\setminus H\, \ |\ \ H\subset H', H'\in \H_E\} \subset \H_{E/H}.$$
\end{lemma}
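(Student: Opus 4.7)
The plan is to fix $H'\in\H_E$ with $H\subset H'$ and show directly that $K:=H'\setminus H$ is hereditary in $E/H$. The guiding idea is that paths in $E/H$ are precisely paths in $E$ that avoid $H$, so the tree of a vertex $u\notin H$ computed inside $E/H$ is contained in the tree of $u$ computed inside $E$.

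First I would unpack the definitions: $K\subset F^0=E^0\setminus H$ is immediate, so the only thing to verify is that $T_{E/H}(u)\subset K$ for every $u\in K$. Pick such a $u$ and an arbitrary $v\in T_{E/H}(u)$, witnessed by a path $\mu=f_1\cdots f_m$ in $E/H$ with $s_F(f_1)=u$ and $r_F(f_m)=v$. Because $F^1\subset E^1$ and $s_F,r_F$ are the restrictions of $s_E,r_E$, the same $\mu$ is a path in $E$ from $u$ to $v$, so $v\in T_E(u)$.

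Now I would invoke heredity in $E$: since $u\in H'$ and $H'\in\H_E$, we have $T_E(u)\subset H'$, hence $v\in H'$. On the other hand, $v\in F^0=E^0\setminus H$ by construction of $F^0$, so $v\notin H$. Combining these, $v\in H'\setminus H=K$, which establishes $T_{E/H}(u)\subset K$ and therefore $K\in\H_{E/H}$.

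There is essentially no obstacle here; the only subtle point is the remark that a path in the quotient graph is literally a path in the original graph whose vertices avoid $H$, which is guaranteed by the definition $F^1=\{f\in E^1:s(f),r(f)\notin H\}$. Everything else is a one-line application of the definitions. Note that the inclusion in the statement need not be an equality in general, since $\H_{E/H}$ may contain hereditary subsets of $E/H$ which, when unioned with $H$, fail to be hereditary in $E$ (because of edges from $H$ into the complement that are not recorded in $E/H$); this is why the lemma is stated as an inclusion rather than an equality.
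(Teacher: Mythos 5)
Your argument is correct and is essentially identical to the paper's proof: both take $u\in H'\setminus H$, observe that a path in $E/H$ from $u$ is also a path in $E$, use heredity of $H'$ in $E$ to place the endpoint in $H'$, and note it avoids $H$ by definition of the quotient graph. Your closing remark on why the inclusion can be strict is a nice addition but not needed for the lemma.
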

\begin{proof}
We assume $H'\in \H_E$, $H\subset H'$. Consider $u\in H'\setminus H$ and $\lambda\in \text{Path}(E/H)$ such that $s(\lambda)=u$ and $r(\lambda)=w\in E/H$. Since $u\in H'$ and $\lambda\in \text{Path}(E)$, then $w\in H'$. Since $w\in E/H$, $w\notin H$. This implies that $w\in H'\setminus H$ and $H'\setminus H\in \H_{E/H}$.

\end{proof}
\begin{definition}\label{defgrafosimple}\rm
We will say that a  directed graph $E$ is {\emph{simple}} if it has no hereditary subsets other than $E^0$ and $\emptyset$.
\end{definition}

\begin{proposition}\label{propsimple}
Let $A$ be a finitely-generated (as algebra) perfect evolution $\K$-algebra. Then $A$ is simple if and only if its associated directed graph $E$ is simple.
\end{proposition}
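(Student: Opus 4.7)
The plan is to prove both implications by contraposition. For the forward direction (simple algebra implies simple graph), I note that this uses neither the perfect nor the finitely-generated hypothesis: if $E$ admits a hereditary subset $H$ with $\emptyset \neq H \subsetneq E^0$, then by item (1) of Lemma \ref{bocata}, $I_H$ is an ideal of $A$. It is nonzero because $H$ is nonempty, and it is a proper ideal by item (5) of Proposition \ref{sabado1} since $H \neq E^0$. This contradicts the simplicity of $A$.

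For the converse, suppose $A$ is not simple and pick a proper nonzero ideal $I$ of $A$. The strategy is to first enlarge $I$ to a maximal ideal $M$ and then apply Corollary \ref{blanco} to write $M = I_H$ for a maximal hereditary subset $H$ of $E^0$. Existence of $M$ follows from a standard Zorn's Lemma argument applied to the poset of proper ideals of $A$ containing $I$: finite generation of $A$ ensures that the union of any chain of proper ideals is again proper, since otherwise the finitely many algebra generators of $A$ would each lie in some member of the chain and, by finiteness, in a common member, forcing that member to equal $A$. Once $M = I_H$ is obtained, the convention that a maximal hereditary subset satisfies $H \neq E^0$ together with $H \neq \emptyset$ (since $M \supset I \neq 0$ forces $I_H \neq 0$, and $I_{\emptyset} = 0$) yields a proper nonempty hereditary subset of $E^0$, contradicting the simplicity of $E$.

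The main obstacle is the Zorn's Lemma step, which is precisely where the finitely-generated hypothesis becomes essential: without it one can exhibit perfect evolution algebras having no maximal ideals at all, such as the $\Z$-indexed basis with $e_i^2 = e_{i+1}$ already discussed in the paper. The perfect hypothesis is then invoked, via Corollary \ref{blanco}, to convert the maximal ideal $M$ into a maximal (and in particular proper nonempty) hereditary subset $H$ with $M = I_H$, producing the desired contradiction. Both hypotheses are therefore used only in the non-trivial direction, and only at these two specific points.
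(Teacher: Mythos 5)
Your proof is correct and follows essentially the same route as the paper: the non-trivial implication is handled exactly as in the paper's proof, by enlarging a proper nonzero ideal to a maximal ideal via Zorn's Lemma (using finite generation) and then invoking Corollary \ref{blanco} (using perfectness) to produce a proper nonempty hereditary subset. The only small difference is in the forward implication, where the paper applies Corollary \ref{blanco} to the maximal ideal $\{0\}$ while you argue by contraposition directly from item (1) of Lemma \ref{bocata} and item (5) of Proposition \ref{sabado1}; this is slightly more elementary and correctly isolates the fact that this direction needs neither the perfect nor the finitely-generated hypothesis.
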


\begin{proof}
If $A$ is simple, then $\{0\}$ is the unique maximal ideal of $A$. Applying Corollary \ref{blanco}, there exists $H=\emptyset$ maximal hereditary subset such that $\{0\}=I_{\emptyset}$. This implies that $\H= \{ \emptyset, E^{0}\}$, so $E$ is simple. Now, consider a proper ideal $I$ of $A$. Since $A $ is finitely-generated, there exists a maximal ideal $J$ such that $I \subset J $. Again, by Corollary \ref{blanco}, $J$ can be written as $J=J_{H}$ with $H$ a maximal hereditary set. Due to the fact that $\H=\{\emptyset, E^0\}$, then necessarily $H=\emptyset$. Therefore $J=I=\{0\}$. So, $A$ is simple.
 \end{proof}

\begin{lemma}\label{tarari}
   Let $A$ be an evolution $\K$-algebra and $B$ a natural basis. Let $H$ be a   maximal hereditary set. Then $E/H$ is either an unique vertex or strongly connected. 
\end{lemma}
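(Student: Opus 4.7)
The plan is to reduce the statement to a structural fact about the hereditary subsets of $E/H$: namely, that maximality of $H$ in $\H_E$ forces $\H_{E/H}=\{\emptyset,(E/H)^0\}$. Once that is established, taking trees produces paths between arbitrary pairs of vertices in $E/H$.

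First, I would sharpen Lemma \ref{miercoles} by establishing the reverse inclusion: if $K\in\H_{E/H}$, then $K\cup H\in\H_E$. To see this, pick $u\in K\cup H$ and any $v\in T_E(u)$. If $u\in H$, heredity of $H$ gives $v\in H$. If $u\in K$, consider a path $\lambda=f_1\cdots f_m$ in $E$ with $s(\lambda)=u$ and $r(\lambda)=v$. Either some intermediate vertex $r(f_i)$ lies in $H$, in which case heredity of $H$ forces $v\in H$; or no vertex of $\lambda$ lies in $H$, so every edge $f_j$ has source and range in $E^0\setminus H$, making $\lambda$ a path in $E/H$, whence $v\in T_{E/H}(u)\subset K$. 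Either way $v\in K\cup H$. Combined with Lemma \ref{miercoles}, the maps $K\mapsto K\cup H$ and $H'\mapsto H'\setminus H$ give a bijection between $\H_{E/H}$ and $\{H'\in\H_E : H\subset H'\}$.

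Next, I would use the maximality hypothesis. Because $H$ is maximal in $\H_E$, any $H'\in\H_E$ with $H\subsetneq H'$ must equal $E^0$; correspondingly, any nonempty element of $\H_{E/H}$ must be $(E/H)^0$. So the only hereditary subsets of $E/H$ are $\emptyset$ and $(E/H)^0$.

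Finally, I would apply this to trees. For any $u\in(E/H)^0$ the tree $T_{E/H}(u)$ is a hereditary subset of $E/H$ containing $u$, so it is nonempty and hence equals $(E/H)^0$. If $\lvert (E/H)^0\rvert=1$ we are in the first alternative of the conclusion. Otherwise, for any two distinct vertices $u,v\in(E/H)^0$ we have $v\in T_{E/H}(u)$, so there is a path in $E/H$ from $u$ to $v$; this is exactly strong connectedness. The only delicate point is the heredity-lifting in the first step, but it is controlled by the observation that once a path in $E$ enters $H$ it cannot leave it, so the dichotomy ``$\lambda$ enters $H$'' versus ``$\lambda$ stays in $E/H$'' is exhaustive.
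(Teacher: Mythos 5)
Your proof is correct, but it takes a genuinely different route from the paper's. You first prove the converse of Lemma \ref{miercoles} — that $K\mapsto K\cup H$ carries $\H_{E/H}$ into $\{H'\in\H_E : H\subset H'\}$ — obtaining a bijection between the two posets; maximality of $H$ then forces $\H_{E/H}=\{\emptyset,(E/H)^0\}$, and strong connectedness follows because each tree $T_{E/H}(u)$ is a nonempty hereditary subset of $E/H$, hence all of $(E/H)^0$. The paper instead argues directly on a pair of vertices $e_i,e_j$ of $E/H$: it asserts from maximality that $E/H$ has a single connected component, extracts a directed path between $e_i$ and $e_j$ in one direction, and then applies maximality to the hereditary set $H\cup T(e_j)$ to conclude $e_i\in T(e_j)$. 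Your route buys two things: it establishes in passing the equivalence ``$H$ maximal $\iff$ $E/H$ simple,'' which the paper only proves later as item (1) of Theorem \ref{teo64} (and there it is deduced \emph{from} Lemma \ref{tarari}, so the logical order is reversed); and it avoids the step in the paper where membership in one undirected connected component is used to produce a directed path in at least one direction, a step that is not justified in the text (though the paper's second half, applied symmetrically to $H\cup T(e_i)$ and $H\cup T(e_j)$, would repair it). The one delicate point in your argument — that a path in $E$ which avoids $H$ at every vertex is literally a path in $E/H$, and that once a path enters the hereditary set $H$ it cannot leave — is exactly the right observation and you handle it explicitly.
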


\begin{proof}
If $E/H$ has more than one vertex, then we consider $e_i, e_j \in E/H$. Therefore, by maximality of $H$,  $E/H$ has only one connected component. So, there exists a path $\mu$ such that either $s(\mu)=e_i$  and $r(\mu)=e_j$ or $s(\mu)=e_j$  and $r(\mu)=e_i$.  We can suppose, without loss of generality, that $s(\mu)=e_i$  and $r(\mu)=e_j$. Since $H\cup \{T(e_j)\}$ is a hereditary with $H \subset H\cup \{T(e_j)\} $, then by maximality of $H$ necessarily $H\cup \{T(e_j)\}=E^0$ so $e_i \in T(e_j)$. Therefore $E/H$ is strongly connected.
    
\end{proof}

\begin{theorem}\label{teo64}
Let $E$ be a directed  graph and $H\in\H_E$. 
\begin{enumerate}
\item $H$ is maximal if and only if $E/H$ is simple.
\item If the graph $E$ is simple, then it is either an isolated vertex or  a graph with no sources or sinks. Furthermore, it is strongly connected.  If $E^0$ is finite, then $E$ is simple if and only if there is a closed path visiting all the vertices of the graph.
\item  Let $A$ be an evolution algebra with associated graph $E$. If $H\in\H_E$, then the evolution algebra $A/I_H$ has associated graph $E/H$ relative to a certain natural basis.
\end{enumerate}
\end{theorem}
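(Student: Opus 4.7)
My plan attacks the three claims in order, using Lemma \ref{miercoles} as the bridge between hereditary subsets of $E$ and of $E/H$.

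For part (1), in the forward direction I will take $K\in \H_{E/H}$ with $K\ne \emptyset$ and show that $H\cup K\in \H_E$: given any path in $E$ starting at a vertex of $H\cup K$, heredity of $H$ in $E$ keeps the path inside $H$ once it enters $H$, while any prefix of the path that stays outside $H$ is itself a path in $E/H$ and its vertices therefore lie in $K$ by $\H_{E/H}$-heredity of $K$. Maximality of $H$ then forces $H\cup K = E^0$, that is $K=(E/H)^0$. The reverse direction is immediate from Lemma \ref{miercoles}: if $H\subsetneq H'\in \H_E$ then $H'\setminus H$ is a nonempty element of $\H_{E/H}$, which must equal $(E/H)^0$ by simplicity, forcing $H'=E^0$.

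For part (2), assume $E$ is simple with $|E^0|\geq 2$. A sink $v$ would make $\{v\}$ hereditary, contradicting simplicity; a source $v$ would make $E^0\setminus\{v\}$ hereditary (any vertex reachable from a non-$v$ vertex cannot equal $v$, else $v$ would have an incoming edge), again contradicting simplicity. Strong connectivity is then immediate, since $T(u)$ is a nonempty hereditary subset for every $u$ and hence equals $E^0$. For the finite case, given strong connectivity with $|E^0|=n$, I will enumerate $E^0=\{v_1,\ldots,v_n\}$ and concatenate paths $v_i\to v_{i+1}$ (indices cyclic modulo $n$) to produce a closed path visiting every vertex; conversely, the existence of such a closed path implies $T(v)=E^0$ for every $v$, so any nonempty hereditary set exhausts $E^0$. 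The single-vertex case with no edges is handled separately, using that a vertex is by convention a trivial closed path of length zero.

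For part (3), I take the natural basis $B=\{e_i\}_{i\in \Lambda}$ of $A$; since $I_H$ is basic by Lemma \ref{bocata}, the set $\bar B=\{\bar e_i : e_i\notin H\}$ is a natural basis of $A/I_H$, as $\bar e_i \bar e_j = \overline{e_i e_j}=0$ whenever $i\ne j$. I will then match edge sets: in the graph of $A/I_H$ relative to $\bar B$ there is an edge $\bar e_i\to \bar e_j$ if and only if the $j$-th coordinate of $\bar e_i^{\,2}=\sum_{e_k\notin H}\omega_{ki}\bar e_k$ is nonzero, equivalently $\omega_{ji}\ne 0$ and $e_j\notin H$, which is precisely the definition of an edge from $e_i$ to $e_j$ in $E/H$.

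I expect the main obstacle to be the heredity bookkeeping in part (1), where one must carefully split an arbitrary $E$-path into a prefix outside $H$ and a suffix inside $H$ and apply the two different heredity conditions to each piece; once this is cleanly handled, parts (2) and (3) reduce to direct structural arguments.
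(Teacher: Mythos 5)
Your proof is correct, but in part (1) you take a genuinely different route from the paper for the forward implication. The paper deduces that $E/H$ is simple from a separate structural fact (Lemma \ref{tarari}: if $H$ is maximal then $E/H$ is a single vertex or strongly connected, and strong connectivity forces simplicity). You instead prove the missing converse of Lemma \ref{miercoles}: for $K\in\H_{E/H}$ nonempty, $H\cup K\in\H_E$, by splitting an arbitrary $E$-path issuing from $K$ into a prefix outside $H$ (which lives in $E/H$, so stays in $K$) and a suffix absorbed by heredity of $H$; maximality of $H$ then forces $H\cup K=E^0$. Your version makes the two-way correspondence between $\{H'\in\H_E : H\subset H'\}$ and $\H_{E/H}$ explicit, which is arguably the cleaner and more reusable statement, whereas the paper's detour buys the strong-connectivity description of $E/H$ as a standalone byproduct. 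In part (2) your source argument ($E^0\setminus\{v\}$ is hereditary when $v$ is a source) is a slight variant of the paper's (which uses $T(r(f))$ for $f\in s^{-1}(v)$ and implicitly relies on the sink case to guarantee $s^{-1}(v)\neq\emptyset$); yours is marginally more self-contained. Part (3) matches the paper's proof, with the edge-matching computation written out more explicitly than the paper's ``it is clear'' closing.
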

\begin{proof}

For item (1), suppose $H$ is maximal, then $E/H$ is either an unique vertex or strongly connected  by Lemma \ref{tarari}. So, $E/H$ is simple.
Now, suppose that $E/H$ is simple. Then $\H_{E/H}=\{\emptyset, E^0 \setminus H\}$. If there exists $H'\in \H_E$ such that $H \subsetneq H'$ by Lemma \ref{miercoles} $H' \setminus H \in \H_{E/H}$. So, necessarily $H'\setminus H=E^0\setminus H$ and we get that $H'=E^0$, hence $H$ is maximal. For item (2), if we have an isolated vertex we are done. Assume we have more than one vertex and take $u\in E^0$ a source. Then there exists $f\in s^{-1}(u)$ with $T(r(f))\in \H_E$ and $T(r(f))\neq E^0$ which is a contradiction. So, there are no sources. Now, suppose that $u\in E^0$ is a sink. Consequently, $\{u\}$ is a hereditary subset different from $E^0$ reaching a contradiction. Now, let $u, v\in E^0$, since $u \in T(v)=E^0$ and $v \in T(u)=E^0$ there exists a path from $u $ to $v$ and other one from $v$ to $u$. If $E^0$ is finite, then it is straightforward  to check that  $E$ is simple if and only if there exists a closed path visiting all the vertices. For item (3), we prove that the set of vertices of the associated graph to $A/I_H$ is $(E/H)^0=E^0\setminus H$ and the set of edges of the associated graph to $A/I_H$ is $(E/H)^1=\{f\in E^1\,\colon\, s(f),r(f)\notin H\}$. Consider a basis of $A$, $B=\{e_i\}_{i\in\Lambda}$ and $B'=\{e_j+I_H\}_{j\in \Gamma}$ a generator system of $A/I_H$ with $\Gamma =\{i\in \Lambda\,\colon\, e_i\notin H\}$. Let us check that $B'$ is a natural basis of $A/I_H$. If we take $\sum_{i\in \Gamma} k_i(e_i+I_H)=0 + I_H$ with $k_i \in \K$, we obtain $\sum_{i\in \Gamma}k_ie_i = \sum_{j \in \Lambda \setminus \Gamma}\rho_j e_j$. Then $k_i=\rho_j=0$ for all $i\in \Gamma$ and $j \in \Lambda \setminus \Gamma$. It is clear that the set of vertices and the set of edges of the associated graph to $A/I_H$ is the same as that of $E / H$.

\end{proof}

Note that the reciprocal of the first statement of the item (2) is not true, it is enough to consider a graph with only two cycles.

\begin{remark}\rm
Observe that, by Proposition \ref{propsimple} and Theorem \ref{teo64}, if $A$ is a finite-dimensional perfect evolution $\K$-algebra, then $A$ is simple if and only if there is a closed path visiting all the vertices
of its associated graph which agrees with \cite[Proposition 2.7]{CKS1}.    
Also, as an illustration of the third item in Theorem \ref{teo64}, consider the three-dimensional evolution algebra $A$ with natural basis $\{e_1,e_2,e_3\}$ and multiplication $e_1^2=e_1+e_2+e_3$, $e_2^2=e_2$
and $e_3^2=e_2+e_3$. This algebra is perfect and its graph $E$ is drawn below. It is easy to realize that there is a unique hereditary subset $H\ne E^0,\emptyset$. In fact, it is $H=\{e_2,e_3\}$. This produces a unique maximal ideal $I_H=\K e_2\oplus \K e_3$. The quotient algebra $A/I_H$ is isomorphic to the ground field and $E/H$ is the graph with a single vertex and a single loop, which corresponds to the one-dimensional evolution algebra $\K$.
\vspace{0.8cm}
$$
   E:   \xymatrix{
      {\bullet}_{e_{1}}  \ar@(ul,ur) \ar[d] \ar[r] & {\bullet}_{e_{3}} \ar@(ul,ur)  \ar[dl] \\
         {\bullet}_{e_{2}}  \ar@(dr,dl)  &   }
    \hspace{1cm}
    H=\{e_2,e_3\}, \hspace{0.2cm}  E/H:  \xymatrix{
      {\bullet}_{e_{1}}  \ar@(ul,ur)   }
$$
\bigskip
$$A=\oplus_{i=1}^3\K e_i\hspace{4 cm} A/I_H\cong \K$$
\end{remark}

Since algorithms for computing the hereditary subsets of a graph are known (see \cite{atlas}), the task of determining algorithmically the maximal ideals of a given finite-dimensional perfect evolution algebra can be effectively implemented as well as the simplicity of this type of algebras.

\vskip 1cm 
{\bf Data Availability Statement:}  The authors confirm that the data supporting the findings of this study are available within the article.

\end{document}